\newtheorem{theorem}{Theorem}
\newtheorem{example}{Example}
\newtheorem{remark}{Remark}
\numberwithin{figure}{section} \numberwithin{equation}{section}
\long\def\@makecaption#1#2{%
 \vskip\abovecaptionskip
  \sbox\@tempboxa{{#1.}\quad #2}%
 \ifdim \wd\@tempboxa >\hsize
    { #1.}\quad #2\par
     \else
  \global \@minipagefalse
   \hb@xt@\hsize{\hfil\box\@tempboxa\hfil}%
   \fi
   \vskip\belowcaptionskip}
\title{ \textbf{Fast predictor-corrector approach for the
tempered fractional ordinary differential equations
}}
\author{Jingwei Deng$^{a,b,}$\footnote{E-mail addresses: djw$_-$1205@163.com.},~   Lijing Zhao$^{a,}$\footnote{E-mail addresses: zhaolj10@lzu.edu.cn.},~   Yujiang Wu$^{a,}$\footnote{E-mail addresses: myjaw@lzu.edu.cn.}\\ \vspace{-0.15cm}
\small{$^{a}$ School of Mathematics and Statistics, Gansu Key Laboratory of Applied Mathematics and Complex Systems,
}\\
\small{Lanzhou University, Lanzhou 730000, P.R. China} \\
   %     \vspace{-0.1cm}\\
\small{$^b$ School of Mathematics and Computer Science, Northwest University for Nationalities, Lanzhou 730030, P.R. China}\\
        \vspace{-0.1cm}
        }
\date{}
\begin{document}
\maketitle \makeatletter
\newcommand{\rmnum}[1]{\romannumeral #1} 　　
\newcommand{\Rmnum}[1]{\expandafter\@slowromancap\romannumeral #1@}
\makeatother \vspace{-1cm}
\begin{abstract}
The tempered evolution equation describes the trapped dynamics, widely appearing in nature, e.g., the motion of living particles in viscous liquid.  This paper proposes the fast predictor-corrector approach for the tempered fractional ordinary differential equations by digging out the potential `very' short memory principle. The algorithms basing on the idea of equidistributing are detailedly described; their effectiveness and low computation cost, being linearly increasing with time $t$, are numerically demonstrated.

%Fractional calculus and fractional
%differential equations are popular in describing anomalous
%diffusion, ground water flow and transport, and the price
%fluctuation in finance, etc. Some numerical methods are developed to
%solve the fractional ordinary differential equations. However, for
%most of these methods it seems that we always have to make a
%trade-off between efficiency and accuracy because of the non-local
%properties of fractional differential operator. In other words, for
%ensuring the accuracy, usually the computational cost is hard to
%accept; at the same time if the computational cost is reduced
%then the accuracy is greatly lost. This paper design an efficient
%numerical schemes but not losing the accuracy based on the idea of
%equidistributing meshes. The numerical examples demonstrate the
%efficacy of our algorithm.
\end{abstract}

%\textbf{Mathematics Subject Classification (2010)}: 26A33, 65M12, 65M06

\textbf{Key words:}\quad Tempered fractional ordinary differential equation, Fast predictor-corrector approach, Short memory principle, Equidistributing meshes

%%introduction--------------------------------------------------
\section{Introduction}
%------------------------------------
The tempered fractional calculus is a mathematical tool to describe the transition between normal and anomalous diffusions (or the anomalous motion in finite time or bounded physical space). The motion of a L\'{e}vy flight particle can be characterized by the continuous time random walk (CTRW) model with a jump distribution function $\phi(x)\sim |x|^{-(1+\alpha)} (1 < \alpha<2)$ and exponential waiting time distribution.
%In the continuous time random walk (CTRW) model,
%for a L\'{e}vy flight particle, the scaling limit of the CTRW
%with a jump distribution function $\phi(x)\sim x^{-(1+\alpha)} (1 < \alpha<2)$
%exhibits superdiffusive dynamics.
The stable L\'{e}vy measure for particle displacement makes arbitrarily
large jumps possible and then results in divergent spatial moments \cite{Magdziarz:07}. However, the infinite spatial moments
are not always appropriate for the physical processes; e.g., see \cite{Baeumera:10,Cartea:07a, Cartea:07}. Exponentially tempering the L\'{e}vy measure is a popular way
to make the moments of L\'{e}vy  distributions finite 
in transport models. Then we get the spatially
tempered fractional Fokker-Planck equation \cite{Cartea:07, Sabzikar:15}. This paper focuses on the time tempered fractional derivative, which appears in the Fokker-Planck equation being derived from the CTRW model with tempered power law waiting time distribution \cite{Gajda:11,Meerschaert:09,Schmidt:10}. The tempered power law waiting time measure has finite first moment and makes the trapped dynamics more physical; since sometimes it is necessary to make the first moment of the waiting time measure finite, e.g., the biological particles moving in viscous cytoplasm and displaying trapped dynamical behavior just have finite lifetime.
%In addition, we know  that there is a link between the subordinated tempered stable
%process and the CTRW model and the tempered stable process can be a random walk limit \cite{Schmidt:10,Gajda:11}.
%
%If the tempered $\alpha$-stable waiting times is introduced in CTRW model, then we can
%get the time tempered diffusion dynamics \cite{Schmidt:10}.
The time tempered dynamics describes the phenomena of the coexistence/transition of subdiffusion and normal diffusion (or the subdiffusion in finite time) which was empirically confirmed in a number of systems \cite{Meerschaert:09,Meerschaert:14}.
More applications for the tempered fractional derivatives and tempered differential equations can be found, for instance, in  poroelasticity \cite{Hanyga:2001},
finance \cite{Cartea:07a}, ground water hydrology \cite{Meerschaert:09},
and geophysical flows \cite{Meerschaert:14}.
%------------------------------------

Tempered fractional calculus is the generalization of fractional calculus.
Comparing with the classical differential equations, one of the big
challenges we have to face when solving the (tempered) fractional equations is the expensiveness of its computation 
cost besides its complexity, this is because fractional operators
are pseudodifferential operators which are non-local
\cite{Samko:93}. This paper focuses on providing the fast predictor-corrector 
approach for the following tempered fractional ordinary differential equation
\begin{equation} \label{e1dot1}
_{0}^{C}D_{t}^{\alpha,\lambda} x(t)=f(t,x(t)),~~~0<t<T,
\end{equation}
with the initial conditions
\begin{equation} \label{e1dot2}
\frac{d^k}{d t^k}\left(e^{\lambda t}x(t)\right)\big|_{t=0}=c_k,\, k=0,1,\cdots, \lceil \alpha \rceil-1,
\end{equation}
where $\alpha \in (0,\infty)$, $\lambda>0$, $c_k$ are arbitrary real
numbers, and $_{0}^{C}D_{t}^{\alpha,\lambda}$ denotes the tempered fractional derivative in
the Caputo sense \cite{Li:14,Samko:93}, defined by
\begin{equation}\label{e1dot3}
      {_{a}^{C}D}_t^{\alpha,\lambda}u(t)=e^{-\lambda t}~{_{a}^{C}D}_t^{\alpha}\big(e^{\lambda t}u(t)\big)=\frac{e^{-\lambda t}}{\Gamma(n-\alpha)}\int_{a}^t\frac{1}{(t-s)^{\alpha-n+1}}\frac{d^n (e^{\lambda s}u(s))}{d
          s^n}ds,
    \end{equation}
where ${_{a}^{C}D}_t^{\alpha}$  denotes
the Caputo fractional derivative \cite{Podlubny:99}.
%\begin{equation}\label{e1dot4}
%      {_{a}^{C}D}_t^{\alpha}(e^{\lambda t}u(t))=\frac{1}{\Gamma(n-\alpha)}\int_{a}^t\frac{1}{(t-s)^{\alpha-n+1}}\frac{d^n(e^{\lambda s}u(s))}{d
%          s^n}ds.
%    \end{equation}
Note that when $\lambda=0$, the Caputo tempered fractional derivative
reduces to the Caputo fractional derivative.

One of the effective and popular methods for numerically solving the initial value problem
(\ref{e1dot1})-(\ref{e1dot2}) with $\lambda=0$ is the predictor-corrector approach, raised by
Diethelm et al \cite{Diethelm:02a,Diethelm:02b,Diethelm:04}. This
method is improved in the Preliminary and Appendix sections of \cite{Deng:07d} (or see the review article \cite{Deng:12}), where around half of
the computation cost seems to be cut off and the convergence order is
improved from $\min\{1+\alpha,2 \}$ to
$\min\{1+2\alpha,2 \}$. Some efforts have been made to reduce the computation cost by using the nested meshes basing on the fixed memory principle \cite{Podlubny:99} and the
short memory principle \cite{Ford:01} of fractional operator when
$\alpha \in (0,1)$ in (\ref{e1dot1}), and the short memory principle
is apprehended from a new point of view in \cite{Deng:07c} which extends its effective 
range to $\alpha \in (0,2)$. With the nested meshes, the computation cost can be reduced from $O(h^{-2})$ to
$O(h^{-1} \log(h^{-1}))$ while not losing the numerical accuracy, but its seems that this is more theoretical claims rather than numerical practices; in fact the numerical simulation results in
\cite{Deng:07c,Ford:01} show this. More recently, the so called Jacobian-predictor-corrector approach, is introduced by Zhao and Deng \cite{Zhao:14}, in which the accuracy is greatly improved  while the computation cost is not increased. 

In this paper, we dig out the potential of the short memory
principle of tempered fractional operators and apply it felicitously  to
reduce the computation cost by using the idea of equidistributing
meshes \cite{White:79} for numerically solving the initial value
problem (\ref{e1dot1})-(\ref{e1dot2}) with the predictor-corrector method. In the next section, we
introduce the techniques of equidistributing meshes, present
the detailed numerical schemes, and describe the algorithms by pseudo codes. Numerical examples to illustrate the
efficacy of the algorithm are given in Section \ref{sec:3}. We conclude the paper with some remarks in the last section.

\section{Predictor-corrector Algorithms with Equidistributing Meshes}\label{sec:2}
The initial value problem (\ref{e1dot1})-(\ref{e1dot2}) is
equivalent to the following Volterra integral equation
\cite{Li:14}
\begin{eqnarray} \label{MainEq}
x(t)&=&\sum\limits_{k=0}^{\lceil \alpha \rceil-1} c_{k}
\frac{e^{-\lambda t}t^k}{k!}+\frac{1}{\Gamma(\alpha)}
\int_0^t e^{-\lambda (t-\tau)}(t-\tau)^{\alpha-1}f(\tau,x(\tau)) d \tau
\nonumber\\
&:=&x_0(t)+\frac{1}{\Gamma(\alpha)}
\int_0^t e^{-\lambda (t-\tau)}(t-\tau)^{\alpha-1}f(\tau,x(\tau)) d \tau,
\end{eqnarray}
where $x_0(t):=\sum\limits_{k=0}^{\lceil \alpha \rceil-1} c_k
\frac{e^{-\lambda t} t^k}{k!}$. For uniform nodes $t_{n+1}=(n+1)h,~n=0,1,\cdots,N$, with $h=T/N$ being the steplength of numerical
computation, the above equation can be recast as \cite{Deng:07c,Deng:07d}
\begin{equation} \label{e2dot2}
\begin{array}{lll}
x(t_{n+1})&=& \displaystyle x_0(t_{n+1})
%\sum\limits_{k=0}^{\lceil \alpha\rceil-1} x_0^{(k)} \frac{t_{n+1}^k}{k!}
+\frac{1}{\Gamma(\alpha)}
\int_{t_n}^{t_{n+1}}e^{-\lambda (t_{n+1}-\tau)}(t_{n+1}-\tau)^{\alpha-1}f(\tau,x(\tau)) d \tau \\
\\
&&  \displaystyle +\frac{1}{\Gamma(\alpha)}
\int_0^{t_n}e^{-\lambda (t_{n+1}-\tau)}(t_{n+1}-\tau)^{\alpha-1}f(\tau,x(\tau)) d \tau,
\end{array}
\end{equation}
or
\begin{equation} \label{e2dot3}
\begin{array}{lll}
x(t_{n+1})&=&  \displaystyle x(t_n)+x_0(t_{n+1})-x_0(t_n)
%\sum\limits_{k=0}^{\lceil\alpha \rceil-1} \frac{x_0^{(k)}}{k!}(t_{n+1}^k-t_n^k)
\\
\\
&& \displaystyle +\frac{1}{\Gamma(\alpha)}
\int_{t_n}^{t_{n+1}}e^{-\lambda (t_{n+1}-\tau)}(t_{n+1}-\tau)^{\alpha-1}f(\tau,x(\tau)) d \tau \\
\\
&&  \displaystyle +\frac{1}{\Gamma(\alpha)}
\int_0^{t_n}\left(e^{-\lambda (t_{n+1}-\tau)}(t_{n+1}-\tau)^{\alpha-1}-e^{-\lambda (t_n-\tau)}(t_n-\tau)^{\alpha-1}\right)f(\tau,x(\tau))
d \tau.
\end{array}
\end{equation}
%The same denotations are used in the following.

For the single step integral
$\int_{t_n}^{t_{n+1}}e^{-\lambda (t-\tau)}(t_{n+1}-\tau)^{\alpha-1}g(\tau) d \tau$, we can employ the rectangle or trapezoidal quadrature formula to approximate it, i.e.,
\begin{equation} \label{e2dot4}
\begin{array}{lll}
&&\int_{t_n}^{t_{n+1}}e^{-\lambda (t_{n+1}-\tau)}(t_{n+1}-\tau)^{\alpha-1}g(\tau) d \tau
\\
&\approx&\int_{t_n}^{t_{n+1}}e^{-\lambda (t_{n+1}-t_n)}(t_{n+1}-\tau)^{\alpha-1}g(t_n) d \tau
=\frac{h^\alpha}{\alpha}e^{-\lambda h}g(t_n),
\end{array}
\end{equation}
or
\begin{equation} \label{e2dot5}
\begin{array}{lll}
&&\int_{t_n}^{t_{n+1}}e^{-\lambda (t_{n+1}-\tau)}(t_{n+1}-\tau)^{\alpha-1}g(\tau) d \tau
\\
&\approx&\int_{t_n}^{t_{n+1}}(t_{n+1}-\tau)^{\alpha-1}
\frac{g(t_{n+1})(\tau-t_n)+e^{-\lambda h}g(t_n)(t_{n+1}-\tau)}{h} d \tau
\\
&=&\frac{h^\alpha}{\alpha(\alpha+1)}\left[\alpha e^{-\lambda h}g(t_n)+g(t_{n+1})\right].
\end{array}
\end{equation}
So, obviously, for the numerical approximations of (\ref{e2dot2}) and
(\ref{e2dot3}), the main computational cost comes from the term
$\int_0^{t_n}\cdot \,d \tau$. Luckily when $t_{n+1} \rightarrow
+\infty$, $(t_{n+1}-\tau)^{\alpha-1}$ decays algebraically with the order
$1-\alpha$ for $\alpha \in (0,1]$, and $(t_{n+1}-\tau)^{\alpha-1}-(t_n-\tau)^{\alpha-1}$ decays with the
order $2-\alpha$ for $\alpha \in (0,2)$, these are the so-called short memory principle
\cite{Ford:01} and the one apprehended from a new point of view
\cite{Deng:07c}; both the integral kernels $\left(e^{-\lambda (t_{n+1}-\tau)}(t_{n+1}-\tau)^{\alpha-1}\right)$ and $\left(e^{-\lambda (t_{n+1}-\tau)}(t_{n+1}-\tau)^{\alpha-1}-e^{-\lambda (t_n-\tau)}(t_n-\tau)^{\alpha-1}\right)$ exponentially decay.  We felicitously apply these decay properties to
reduce the computation cost based on equidistributing meshes rather
than roughly using the nested meshes. The linearly increasing computation cost with time $t$ is shown in the following simulations.
\subsection{Algorithms for (\ref{e2dot2}) when $\alpha \in (0,1]$}

In this subsection, we design numerical schemes for $\alpha \in (0,1]$ by using (\ref{e2dot2}) with the kernel $e^{-\lambda (t_{n+1}-\tau)}(t_{n+1}-\tau)^{\alpha-1}$. As mentioned above, how to compute the integral
$\int_0^{t_n}e^{-\lambda (t_{n+1}-\tau)}(t_{n+1}-\tau)^{\alpha-1}f(\tau,x(\tau)) d \tau$ efficiently is the key of
reducing the computation cost to obtain the numerical approximations to $x(t_{n+1})$.
In the previous predictor-corrector work introduced in \cite{Diethelm:02a,Diethelm:02b,Diethelm:04}, as well as its improved version in \cite{Deng:07d,Deng:12}, $n$ steps' calculations are used to approximate the integral $\int_0^{t_n}\cdot \,d \tau$.
While, by noticing that $(t_{n+1}-\tau)^{\alpha-1}$ decays with power $1-\alpha$, and $e^{-\lambda (t_{n+1}-\tau)}$ damps exponentially (see figures below), we can actually select less mesh points, say
\begin{equation} \label{e2dot6}
0=\tau_{0,n}<\tau_{1,n}<\cdots<\tau_{m_n,n}=t_n,
\end{equation}
at which to approximate the term $\int_0^{t_n}\cdot \,d \tau$ by compounding two-point trapezoidal quadrature formula. In the following, we list two ways of choosing the mesh points $\{\tau_{i,n}\}$.

\subsubsection{Two equidistributing options for selecting the quadrature nodes}

Assuming that we have already get the points $\tau_{i,n}$, $0\leq i<m_n$. For not losing the accuracy but reducing the computation cost, the
first way of selecting the next point $\tau_{i+1,n}$ is based on the
principle that the values of the function $y(\tau)=e^{-\lambda (t_{n+1}-\tau)}(t_{n+1}-\tau)^{\alpha-1}$ are equally distributed, i.e.,
\begin{eqnarray}\label{e2dot7_add}
&& y(\tilde{\tau}_{i+1,n})-y({\tau}_{i,n})
\nonumber\\
&=&e^{-\lambda (t_{n+1}-\tilde{\tau}_{i+1,n})}(t_{n+1}-\tilde{\tau}_{i+1,n})^{\alpha-1}
-e^{-\lambda (t_{n+1}-\tau_{i,n})}(t_{n+1}-\tau_{i,n})^{\alpha-1}
\nonumber\\
&=&e^{-\lambda \left(t_{n+1}-\tau_{i,n}-(\tilde{\tau}_{i+1,n}-\tau_{i,n})\right)}
\left(t_{n+1}-\tau_{i,n}-(\tilde{\tau}_{i+1,n}-\tau_{i,n})\right)^{\alpha-1}
-e^{-\lambda (t_{n+1}-\tau_{i,n})}(t_{n+1}-\tau_{i,n})^{\alpha-1}
\nonumber\\
&=&\Delta y,
\end{eqnarray}
where $\Delta y$ is a given small positive real number. Since (\ref{e2dot7_add}) is a nonlinear equation w.r.t.  $\tilde{\tau}_{i+1,n}$, we use its linear part instead to select the wanted point, and add another condition that promise  $\tau_{i+1,n}$ is at least one step away from $\tau_i$. That is, let %let $\tilde{\tau}_{i+1,n}$ satisfies
\begin{equation}\label{e2dot7}
\tilde{\tau}_{i+1,n}=\max\left\{
\begin{array}{l}
\textrm{solve}(\tilde{\tau}_{i+1,n}-\tau_{i,n}=h, \tilde{\tau}_{i+1,n}),
\\
\textrm{solve}\big((\tilde{\tau}_{i+1,n}-\tau_{i,n})\left[(1-\alpha)+\lambda(t_{n+1}-\tau_{i,n})\right]
=\Delta y e^{\lambda(t_{n+1}-\tau_{i,n})(t_{n+1-\tau_{i,n}})},  \tilde{\tau}_{i+1,n} \big)
\end{array}
\right\},
\end{equation}
where `solve(equ, var)' means the solution of `equ' with unknown variable `var'. Secondly, to avoid involving non-equal divided nodes, we take
\begin{equation}\label{e2dot8}
\tau_{i+1,n}=\left\lfloor\frac{\tilde{\tau}_{i+1,n}}{h}\right\rfloor\cdot h.
\end{equation}
%It is easy to check that there is
%\begin{equation}\label{e2dot9}
%(t_{n+1}-\tau_{i+1,n})^{\alpha-1}-(t_{n+1}-\tau_{i,n})^{\alpha-1}\leq\Delta y,
%~~~\textrm{or}~~~
%\tau_{i+1,n}=\tau_{i,n}+h,
%\end{equation}
%holds.
Figs. \ref{fig:2.1}-\ref{fig:2.2} and Algorithm \ref{alg:2.1} illustrate this equal-height distribution method more clearly.
\begin{figure}[!htbp]
\includegraphics[scale=0.4]{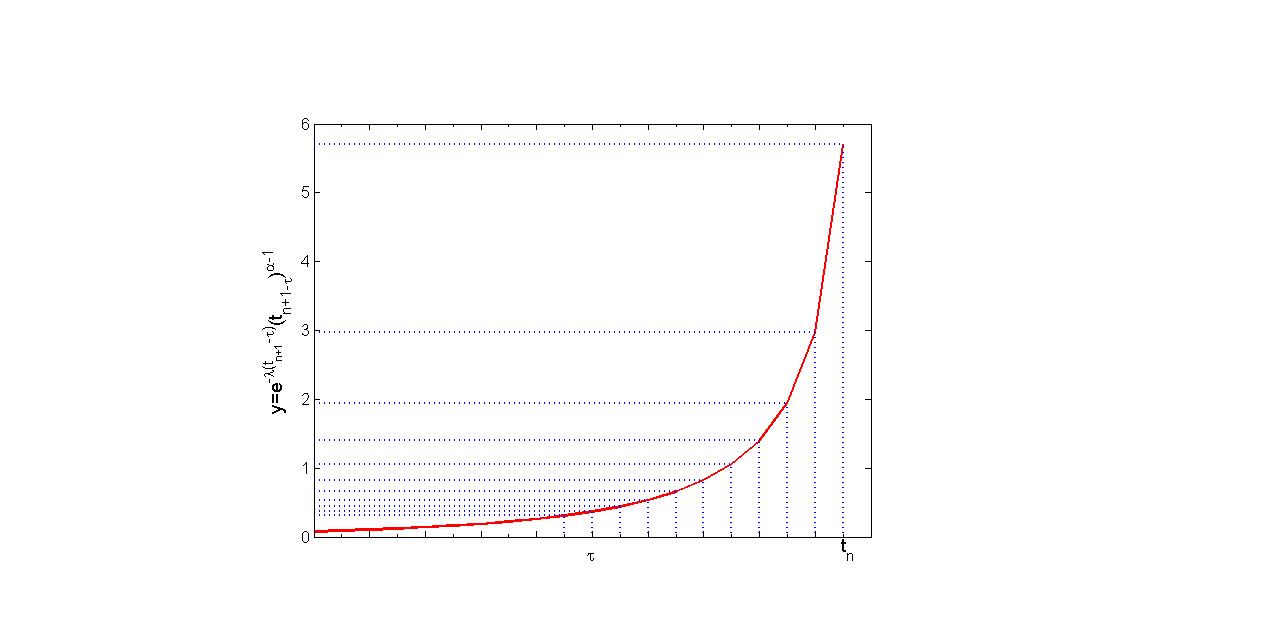}\\
\caption{The relationship between the selected nodes $\{\tau_i\}$ and the function $y=e^{-\lambda(t_{n+1}-\tau)}(t_{n+1}-\tau)^{\alpha-1}$ in the algorithm of equal-height distribution, where $\Delta y=h=1/10$, $t_{n+1}=2$, $\lambda=1$, and $\alpha=0.2$.}\label{fig:2.1}
\end{figure}
\begin{figure}[!htbp]
\includegraphics[scale=0.4]{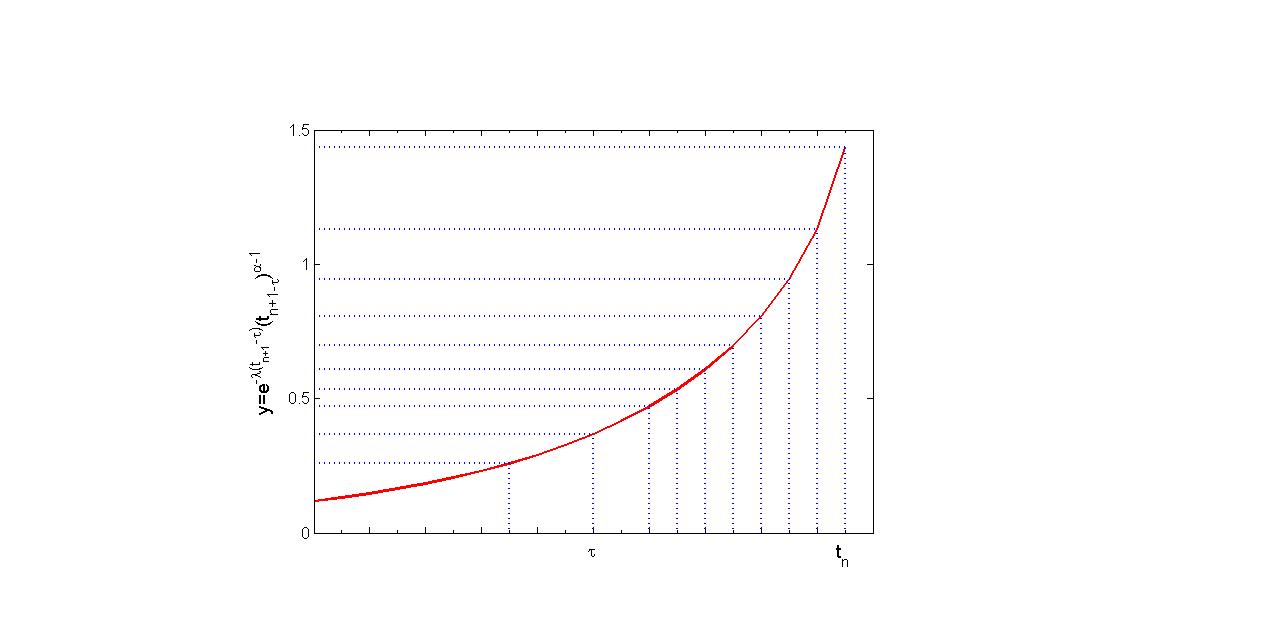}\\
\caption{The relationship between the selected nodes $\{\tau_i\}$ and the function $y=e^{-\lambda(t_{n+1}-\tau)}(t_{n+1}-\tau)^{\alpha-1}$ in the algorithm of equal-height distribution, where $\Delta y=h=1/10$, $t_{n+1}=2$, $\lambda=1$, and $\alpha=0.8$.}\label{fig:2.2}
\end{figure}
%----------------------------------------------
\begin{algorithm}[h]
\caption{The pseudocode of equal-height distribution algorithm for (\ref{e2dot7})-(\ref{e2dot8})}\label{alg:2.1}
\begin{algorithmic}[l]
\STATE $i=0$; $\tau_{i,n}=0$; \% $\tau_{0,n}=0;$\\
\STATE $\tau_c=0$; \% \emph{current node is} $\tau_{0,n}$
\WHILE {$\tau_{c}<t_n$}
    \STATE $\tau_{i+1,n}=\tau_{c}+\frac{\Delta y e^{\lambda(t_{n+1}-\tau_{c})(t_{n+1-\tau_{c}})}}
           {\left[(1-\alpha)+\lambda(t_{n+1}-\tau_{c})\right]}$;
    \IF {$\tau_{i+1,n}>t_n$}
         \STATE $\tau_{i+1,n}=t_n$; %\% \emph{include the node} $t_n$
         \STATE break;
    \ENDIF\\
    \% \emph{if} $y=e^{-\lambda(t_{n+1}-\tau)}(t_{n+1}-\tau)^{\alpha-1}$
        \emph{changes too fast}
    \IF{$\tau_{i+1,n}-\tau_{c}<h$}
        \STATE  $\tau_{i+1,n}=\tau_{c}+h$;
        \% \emph{make} $\tau_{i+1,n}$ \emph{be one step away from} $\tau_{i,n}$
    \ELSE
        \STATE  $\tau_{i+1,n}=\lfloor \tau_{i+1,n}/h \rfloor\ast h$;
                \% \emph{let} $\tau_{i+1,n}$ \emph{belong to the uniform mesh points} $\{t_j\}_{j=0}^n$
    \ENDIF
    \STATE  $\tau_{c}=\tau_{i+1,n}$; $i=i+1$;
\ENDWHILE
\end{algorithmic}
\end{algorithm}
%------------------------------------------------

%The second way of choosing the mesh points $\{\tau_{i,n}\}$ is based on the geometric meaning of integration that each part of area are expected to be equally distributed as much as possible. Similar to the equal-height distribution method, it is expected that
The second way of choosing the mesh points $\{\tau_{i,n}\}$ is to make the integrations of $y(\tau)=e^{-\lambda(t_{n+1}-\tau)}(t_{n+1}-\tau)^{\alpha-1}$ w.r.t.  $\tau$ on any interval are almost the same, i.e.,
\begin{equation}\label{e2dot10_add1}
\int_{\tau_{i,n}}^{\tilde{\tau}_{i+1,n}}e^{-\lambda(t_{n+1}-\tau)}
(t_{n+1}-\tau)^{\alpha-1} d\tau=\Delta s,
\end{equation}
where $\Delta s$ is a given small positive real number. Also, since (\ref{e2dot10_add1}) is a nonlinear function w.r.t. $\tilde{\tau}_{i+1,n}$, we simply approximate it by
\begin{eqnarray}\label{e2dot10_add2}
&&\int_{\tau_{i,n}}^{\tilde{\tau}_{i+1,n}}e^{-\lambda(t_{n+1}-\tau)}
(t_{n+1}-\tau)^{\alpha-1} d\tau
\nonumber\\
&\approx& (t_{n+1}-\tau_{i,n})^{\alpha-1}
\int_{\tau_{i,n}}^{\tilde{\tau}_{i+1,n}}e^{-\lambda(t_{n+1}-\tau)} d \tau
=\Delta s.
\end{eqnarray}
Thus, %the choice of $\tilde{\tau}_{i+1,n}$ is required to satisfy
\begin{equation}\label{e2dot10}
\tilde{\tau}_{i+1,n}=\max\left\{
\begin{array}{l}
\textrm{solve}(\tilde{\tau}_{i+1,n}-\tau_{i,n}=h, \tilde{\tau}_{i+1,n}),
\\
\textrm{solve}\left((\tilde{\tau}_{i+1,n}-\tau_{i,n})=\Delta s(t_{n+1}-\tau_{i,n})^{1-\alpha}e^{\lambda(t_{n+1}-\tau_{i,n})}, \tilde{\tau}_{i+1,n} \right)
\end{array}
\right\};
\end{equation}
 and then by taking
\begin{equation}\label{e2dot11}
\tau_{i+1,n}=\left\lfloor\frac{\tilde{\tau}_{i+1,n}}{h}\right\rfloor\cdot h,
\end{equation}
$\tau_{i+1,n}$ belongs to the set of the uniform grid points $\{t_j\}_{j=0}^{n}$.
%Also, there is
%\begin{equation}\label{e2dot12}
%(t_{n+1}-\tau_{i,n})^{\alpha}\leq(t_{n+1}-\tau_{i+1,n})^{\alpha-1}+\alpha\Delta s,
%~~~\textrm{or}~~~
%\tau_{i+1,n}=\tau_{i,n}+h,
%\end{equation}
%holds.
Figs. \ref{fig:2.3}-\ref{fig:2.4} and Algorithm \ref{alg:2.2} show this equal-area distribution criterion of selecting the mesh point $\{\tau_{i,n}\}$ more concretely.
\begin{figure}[!htbp]
\includegraphics[scale=0.4]{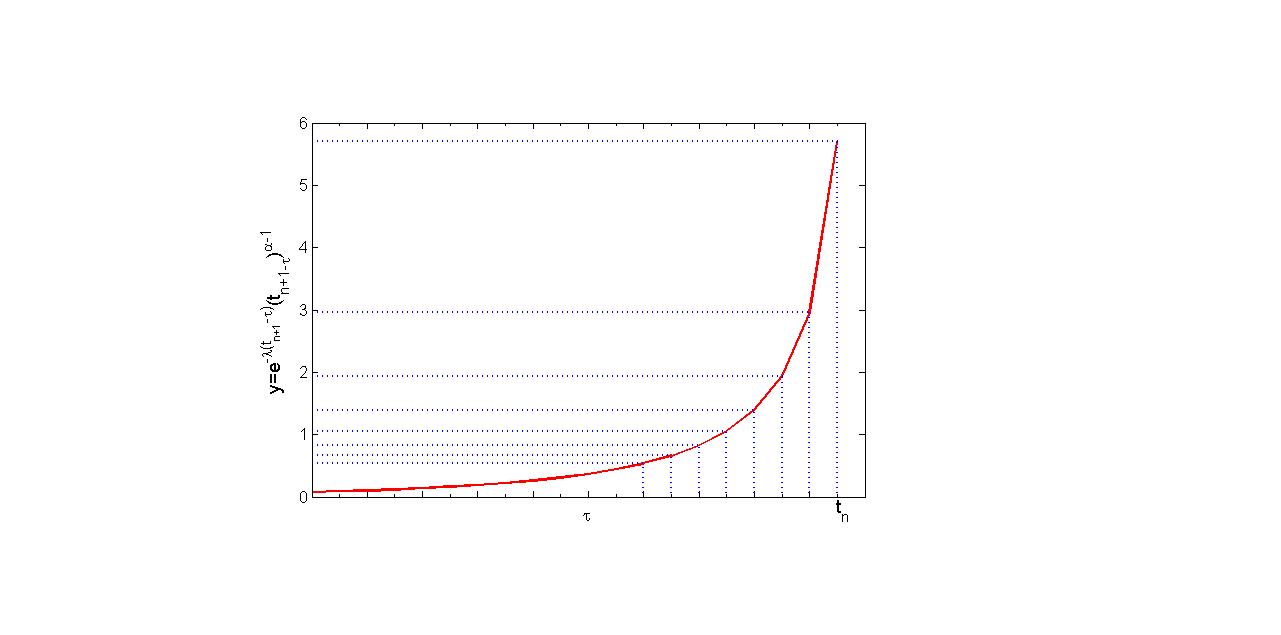}\\
\caption{The relationship between the selected nodes $\{\tau_i\}$ and the function $y=e^{-\lambda(t_{n+1}-\tau)}(t_{n+1}-\tau)^{\alpha-1}$ in the algorithm of equal-area distribution, where $h=1/10$, $\Delta s=h$, $t_{n+1}=2$, $\lambda=1$, and $\alpha=0.2$.}\label{fig:2.3}
\end{figure}
\begin{figure}[!htbp]
\includegraphics[scale=0.4]{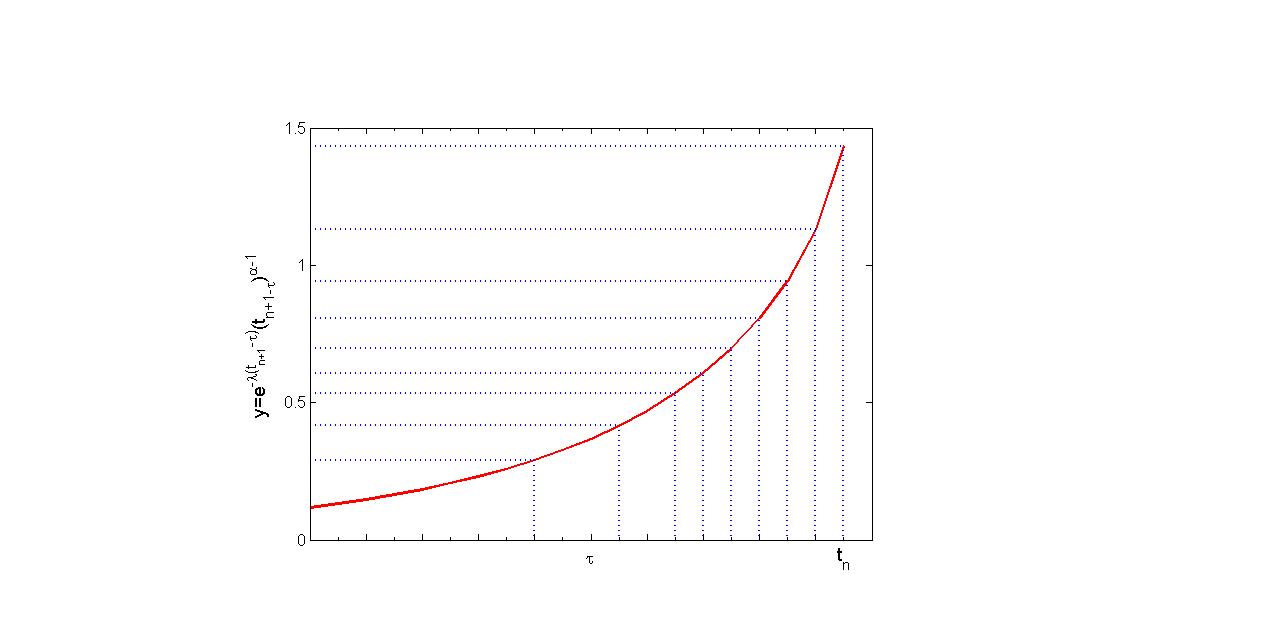}\\
\caption{The relationship between the selected nodes $\{\tau_i\}$ and the function $y=e^{-\lambda(t_{n+1}-\tau)}(t_{n+1}-\tau)^{\alpha-1}$ in the algorithm of equal-area distribution, where $h=1/10$, $\Delta s=h$, $t_{n+1}=2$, $\lambda=1$, and $\alpha=0.8$.}\label{fig:2.4}
\end{figure}
%----------------------------------------------
\begin{algorithm}[h]
\caption{The pseudocode of equal-area distribution algorithm for (\ref{e2dot10})-(\ref{e2dot11})}\label{alg:2.2}
\begin{algorithmic}[l]
\STATE $i=0$; $\tau_{i,n}=0$; \% $\tau_{0,n}=0;$\\
\STATE $\tau_c=0$; \% \emph{current node is} $\tau_{0,n}$
\WHILE {$\tau_{c}<t_n$}
    \STATE $\tau_{i+1,n}=\tau_{c}+\Delta s(t_{n+1}-\tau_{c})^{1-\alpha}e^{\lambda(t_{n+1}-\tau_{c})}$;
    \IF {$\tau_{i+1,n}>t_n$}
         \STATE $\tau_{i+1,n}=t_n$; %\% \emph{include the node} $t_n$
         \STATE break;
    \ENDIF\\
    \% \emph{if} $y=e^{-\lambda(t_{n+1}-\tau)}(t_{n+1}-\tau)^{\alpha-1}$
                \emph{changes too fast}
    \IF{$\tau_{i+1,n}-\tau_{c}<h$}
        \STATE  $\tau_{i+1,n}=\tau_{c}+h$;
        \% \emph{make} $\tau_{i+1,n}$ \emph{be one step away from} $\tau_{i,n}$
    \ELSE
        \STATE  $\tau_{i+1,n}=\lfloor \tau_{i+1,n}/h \rfloor\ast h$;
                \% \emph{let} $\tau_{i+1,n}$ \emph{belong to the uniform mesh points} $\{t_j\}_{j=0}^n$
    \ENDIF
    \STATE  $\tau_{c}=\tau_{i+1,n}$; $i=i+1$;
\ENDWHILE
\end{algorithmic}
\end{algorithm}
%------------------------------------------------
\begin{remark}\label{remark:2.1}
From Figs. \ref{fig:2.1}-\ref{fig:2.4}, we can see that since the function $y(\tau)=e^{-\lambda(t_{n+1}-\tau)}(t_{n+1}-\tau)^{\alpha-1}$ steepens with the decrease of $\alpha$, for fixed $h$ and fixed $\Delta y$ in the equal-height distribution method or fixed $\Delta s$ in the equal-area one, the less $\alpha$ is, the fewer the quadrature nodes $\{\tau_{i,n}\}$ are selected. This is different from the predictor-corrector approach proposed in \cite{Diethelm:02a,Diethelm:02b,Diethelm:04} or the improved version in \cite{Deng:07d}, in which the integral $\int_{0}^{t_n}\cdot d\tau$ is always approximated on all the mesh points $\{t_j\}_{j=0}^n$ and the nodes number is unrelated to the value of $\alpha$.
\end{remark}

\subsubsection{Predictor-corrector approach for (\ref{e2dot2}) when $\alpha \in (0,1]$}

Since the mesh nodes $\{\tau_{i,n}\}$ chosen from Algorithm \ref{alg:2.1} or Algorithm \ref{alg:2.2} still belong to the set of the uniform nodes, we denote
\begin{equation}\label{e2dot13}
\tau_{i,n}=t_{n_i},~~~i=0,1,\cdots,m_n,
\end{equation}
where $t_j=jh$. Note that $t_{n_0}=0$ and $t_{n_{m_n}}=t_n$. In this way, by using the product trapezoidal quadrature formula to replace the integral, there is
\begin{eqnarray}\label{e2dot14}
&&\int_0^{t_n}e^{-\lambda(t_{n+1}-\tau)}(t_{n+1}-\tau)^{\alpha-1}g(\tau) d \tau
\nonumber\\
&=&\sum_{i=0}^{m_n-1}\int_{\tau_{i,n}}^{\tau_{i+1,n}}
e^{-\lambda(t_{n+1}-\tau)}(t_{n+1}-\tau)^{\alpha-1}g(\tau) d\tau
\nonumber\\
&\approx&\sum_{i=0}^{m_n-1}\int_{\tau_{i,n}}^{\tau_{i+1,n}} (t_{n+1}-\tau)^{\alpha-1}
\frac{e^{-\lambda(t_{n+1}-\tau_{i+1,n})}g(\tau_{i+1,n})(\tau-\tau_{i,n})
+e^{-\lambda(t_{n+1}-\tau_{i,n})}g(\tau_{i,n})(\tau_{i+1,n}-\tau)}
{\tau_{i+1,n}-\tau_{i,n}}
d\tau
\nonumber\\
&=&\sum_{i=0}^{m_n-1}\int_{t_{n_i}}^{t_{n_{i+1}}} (t_{n+1}-\tau)^{\alpha-1}
\frac{e^{-\lambda(t_{n+1}-t_{n_{i+1}})}g(t_{n_{i+1}})(\tau-t_{n_{i}})
+e^{-\lambda(t_{n+1}-t_{n_i})}g(t_{n_{i}})(t_{n_{i+1}}-\tau)}
{t_{n_{i+1}}-t_{n_{i}}}
d\tau
\nonumber\\
&=&\sum_{i=0}^{m_n-1}\frac{e^{-\lambda(t_{n+1}-t_{n_{i+1}})}g(t_{n_{i+1}})}{(n_{i+1}-n_{i})h}
\int_{t_{n_i}}^{t_{n_{i+1}}}(t_{n+1}-\tau)^{\alpha-1}(\tau-t_{n_{i}}) d\tau
\nonumber\\
&&+\sum_{i=0}^{m_n-1}\frac{e^{-\lambda(t_{n+1}-t_{n_{i}})}g(t_{n_{i}})}{(n_{i+1}-n_{i})h}
\int_{t_{n_i}}^{t_{n_{i}}}(t_{n+1}-\tau)^{\alpha-1}(t_{n_{i+1}}-\tau) d\tau
\nonumber\\
&=&\sum_{i=1}^{m_n}\frac{h^\alpha e^{-\lambda(t_{n+1}-t_{n_{i}})}g(t_{n_{i}})}{(n_{i}-n_{i-1})}
\left[-\frac{(n_{i}-n_{i-1})(n+1-n_i)^\alpha}{\alpha}
-\frac{(n+1-n_{i})^{\alpha+1}-(n+1-n_{i-1})^{\alpha+1}}{\alpha(\alpha+1)}\right]
\nonumber\\
&&+\sum_{i=0}^{m_n-1}\frac{h^\alpha e^{-\lambda(t_{n+1}-t_{n_{i}})}g(t_{n_{i}})}{(n_{i+1}-n_{i})}
\left[\frac{(n_{i+1}-n_{i})(n+1-n_i)^\alpha}{\alpha}
+\frac{(n+1-n_{i+1})^{\alpha+1}-(n+1-n_{i})^{\alpha+1}}{\alpha(\alpha+1)}\right]
\nonumber\\
&=&\sum_{i=1}^{m_n-1}\frac{h^\alpha e^{-\lambda(t_{n+1}-t_{n_{i}})}g(t_{n_{i}})}{\alpha(\alpha+1)}\cdot
\nonumber\\
&&~~~~~~\left[\frac{(n+1-n_{i+1})^{\alpha+1}-(n+1-n_{i})^{\alpha+1}}{n_{i+1}-n_{i}}
-\frac{(n+1-n_{i})^{\alpha+1}-(n+1-n_{i-1})^{\alpha+1}}{n_{i}-n_{i-1}}\right]
\nonumber\\
&&+\frac{h^\alpha e^{-\lambda t_{n+1}}g(t_{0})}{n_1\alpha(\alpha+1)}
\left\{(n+1-n_1)^{\alpha+1}-(n+1)^\alpha[n+1-(\alpha+1)n_1]\right\}
\nonumber\\
&&+\frac{h^\alpha g(t_{n})e^{-\lambda h}}{(n-n_{m_n-1})\alpha(\alpha+1)}
\left\{(n+1-n_{m_n-1})^{\alpha+1}-(\alpha+1)(n-n_{m_n-1})-1\right\}.
\end{eqnarray}
Assume that the approximations $x_j\approx x(t_j)$, $j=1,2,\cdots,n$, have been obtained. Then by combining with the one step approximations (\ref{e2dot4}) and (\ref{e2dot5}), the predictor-corrector approach to compute $x_{n+1}\approx x(t_{n+1})$ for $\alpha\in(0,1]$ can be yielded as

{\bf Case 1 ($n=0$):}
\begin{equation}\label{e2dot15}
\left\{
\begin{array}{l}
\displaystyle x_1^{Pr}=x_0(t_1)+\frac{h^\alpha}{\Gamma(\alpha+1)} e^{-\lambda h}f(0,x_0), \\
 \\
\displaystyle x_1=x_0(t_1)+\frac{h^\alpha}{\Gamma(\alpha+2)}\left[f(h,x_1^{Pr}) +\alpha e^{-\lambda h}f(0,x_0)\right];
\end{array}
\right.
\end{equation}

{\bf Case 2 ($n\geq 1$):}
\begin{equation}\label{e2dot16}
\left\{
\begin{array}{ll}
 x_{n+1}^{Pr} & =  \displaystyle x_0(t_{n+1})+
\frac{h^\alpha}{\Gamma(\alpha+2)}
\left[\sum\limits_{i=0}^{m_n-1} a_{i,n+1}f(t_i,x_{n_i})
+b_{n} f(t_n,x_n)\right], \\
\\
x_{n+1} & =  \displaystyle x_0(t_{n+1})+
\frac{h^\alpha}{\Gamma(\alpha+2)}
\left[\sum\limits_{i=0}^{m_n} a_{i,n+1}f(t_i,x_{n_i})
+f(t_{n+1},x_{n+1}^{Pr})\right],
\end{array}
\right.
\end{equation}
where
\begin{equation}\label{e2dot17}
a_{i,n+1}= \left\{
\begin{array}{ll}
\frac{e^{-\lambda (n+1)h}}{n_1}
\left\{(n+1-n_1)^{\alpha+1}-(n+1)^\alpha[n+1-(\alpha+1)n_1]\right\},
& i=0,\\
\\
e^{-\lambda (n+1-n_i) h}\cdot
\left[\frac{(n+1-n_{i+1})^{\alpha+1}-(n+1-n_{i})^{\alpha+1}}{n_{i+1}-n_{i}}
-\frac{(n+1-n_{i})^{\alpha+1}-(n+1-n_{i-1})^{\alpha+1}}{n_{i}-n_{i-1}}\right],
& 1\leq i\leq m_n-1,\\
\\
\frac{e^{-\lambda h}}{n-n_{m_n-1}}\left[(n+1-n_{m_n-1})^{\alpha+1}-(n+1-n_{m_n-1})\right],
&i=m_n;
\end{array}
 \right.
\end{equation}
and
\begin{equation}\label{e2dot18}
b_{n}=a_{m_n,n+1}+e^{-\lambda h}.
\end{equation}

If no equidistributing principle is adopted, i.e., $n_i=i$, the predictor-corrector approach for (\ref{e2dot2}) to compute $x_{n+1}\approx x(t_{n+1})$ is described as

{\bf Case 1 ($n=0$):} being the same as (\ref{e2dot15});

{\bf Case 2 ($n\geq 1$):}
\begin{equation}\label{e2dot199}
\left\{
\begin{array}{ll}
 x_{n+1}^{Pr} & =  \displaystyle x_0(t_{n+1})+
\frac{h^\alpha}{\Gamma(\alpha+2)}
\left[\sum\limits_{i=0}^{n-1} d_{i,n+1}f(t_i,x_i)+e^{-\lambda h}(2^{\alpha+1}-1)\cdot f(t_n,x_n)
\right], \\
\\
x_{n+1} & =  \displaystyle x_0(t_{n+1})+
\frac{h^\alpha}{\Gamma(\alpha+2)}
\left[\sum\limits_{i=0}^{n} d_{i,n+1}f(t_i,x_i)
+f(t_{n+1},x_{n+1}^{Pr})\right],
\end{array}
\right.
\end{equation}
where
\begin{equation}\label{e2dot200}
d_{i,n+1}= \left\{
\begin{array}{ll}
e^{-\lambda (n+1)h}
\left(n^{\alpha+1}-(n+1)^\alpha(n-\alpha)\right),
& i=0,\\
\\
e^{-\lambda (n+1-i) h}\cdot
\left((n-i)^{\alpha+1}-2(n+1-i)^{\alpha+1}
+(n+2-i)^{\alpha+1}\right),
& 1\leq i\leq n.
\end{array}
 \right.
\end{equation}

\begin{theorem}
 For the tempered fractional initial value problem (\ref{e1dot1})-(\ref{e1dot2}),
%$$
%D_\ast^\alpha y(t) =f(t, y(t)),~~ y^{(i)}(0)=y_0^{(i)},~~ i=0,1,
%\cdots, \lceil \alpha \rceil-1,
%$$
assume $f(t,x(t))$ of (\ref{e1dot1}) belongs to  $C^2[0,T]$ for some suitable $T$. Then for the scheme (\ref{e2dot199})-(\ref{e2dot200}) we have
$$
\max\limits_{0 \le n \le N} |x(t_n)-x_n|=\left\{ \begin{array}{ll}
O(h^2), & \textrm{if} ~~ \alpha \ge 0.5, \\
O(h^{1+2 \alpha}), & \textrm{if} ~~ 0<\alpha < 0.5.
\end{array}
\right.
$$
\end{theorem}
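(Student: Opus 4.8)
The plan is to recast the scheme as a perturbation of the exact Volterra relation (\ref{MainEq}) and run the classical fractional-Adams error analysis of Diethelm--Ford--Freed and Deng, checking at each stage that the exponential tempering factor does not degrade the order. Write $g(\tau):=f(\tau,x(\tau))$, which lies in $C^2[0,T]$ by hypothesis, and set $K(t,\tau):=e^{-\lambda(t-\tau)}(t-\tau)^{\alpha-1}$. Since $\tau\le t_{n+1}$ and $\lambda>0$ we have $0<e^{-\lambda(t_{n+1}-\tau)}\le 1$ with all $\tau$-derivatives bounded, so $\phi_{n+1}(\tau):=e^{-\lambda(t_{n+1}-\tau)}g(\tau)$ satisfies $\|\phi_{n+1}''\|_\infty\le C$ uniformly in $n$. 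This observation lets the tempered kernel be viewed as the weakly singular weight $(t_{n+1}-\tau)^{\alpha-1}$ acting on a uniformly $C^2$ factor, reducing everything to the untempered analysis.

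First I would establish the two local quadrature (consistency) estimates. Let $\hat x_{n+1}$ and $\hat x_{n+1}^{Pr}$ denote the corrector and predictor of (\ref{e2dot199})--(\ref{e2dot200}) evaluated with the exact nodal data $g(t_i)$. The corrector is the product trapezoidal rule applied to $\int_0^{t_{n+1}}(t_{n+1}-\tau)^{\alpha-1}\phi_{n+1}(\tau)\,d\tau$, so the sharp remainder bound gives that $R_{n+1}:=x(t_{n+1})-\hat x_{n+1}$ obeys $|R_{n+1}|\le C\,t_{n+1}^\alpha\|\phi_{n+1}''\|_\infty h^2=O(h^2)$ uniformly for $t_{n+1}\le T$. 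For the predictor I would not estimate its remainder directly but compare it with the corrector: using (\ref{e2dot200}), the difference of the two quadratures with exact data is $\hat x_{n+1}-\hat x_{n+1}^{Pr}=\frac{h^\alpha}{\Gamma(\alpha+2)}\bigl(g(t_{n+1})-e^{-\lambda h}g(t_n)\bigr)$, and a Taylor expansion gives $g(t_{n+1})-e^{-\lambda h}g(t_n)=h\bigl(g'(t_n)+\lambda g(t_n)\bigr)+O(h^2)=O(h)$, whence $|x(t_{n+1})-\hat x_{n+1}^{Pr}|=O(h^2)+O(h^{1+\alpha})=O(h^{1+\alpha})$.

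Next I would form the error recursion. With $e_j:=|x(t_j)-x_j|$, subtracting the scheme from the exact relation and using that $f$ is Lipschitz in its second argument gives
\begin{equation*}
e_{n+1}\le |R_{n+1}| + \frac{Lh^\alpha}{\Gamma(\alpha+2)}\sum_{i=0}^{n}|d_{i,n+1}|\,e_i + \frac{Lh^\alpha}{\Gamma(\alpha+2)}\,|x(t_{n+1})-x_{n+1}^{Pr}|.
\end{equation*}
The crucial point is that the predictor error enters the corrector only through the single factor $\frac{h^\alpha}{\Gamma(\alpha+2)}$; inserting the predictor estimate $O(h^{1+\alpha})$ produces the term $O(h^\alpha\cdot h^{1+\alpha})=O(h^{1+2\alpha})$, while the stability part contributes $\frac{Lh^\alpha}{\Gamma(\alpha+2)}\sum_i\bigl(|d_{i,n+1}|+Lh^\alpha|d^{Pr}_{i,n+1}|\bigr)e_i$. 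Here I would use that the second differences of $k\mapsto k^{\alpha+1}$ give $|d_{i,n+1}|\le C(n+1-i)^{\alpha-1}$ (boundary terms bounded analogously, and $e^{-\lambda(\cdot)}\le1$ only helps), together with $h^\alpha\sum_i|d_{i,n+1}|\le C\,t_{n+1}^\alpha\le CT^\alpha$, so the $h^{2\alpha}$-weighted predictor contribution is absorbed at higher order. This reduces the recursion to $e_{n+1}\le A\,h^{\min\{2,1+2\alpha\}}+Bh^\alpha\sum_{i=0}^n(n+1-i)^{\alpha-1}e_i$, after which the weakly singular discrete Gronwall lemma (with its Mittag--Leffler-type majorant) yields $\max_n e_n\le C\,h^{\min\{2,1+2\alpha\}}$. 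Checking the start-up step (\ref{e2dot15}) separately shows $e_1=O(h^{2+\alpha})+O(h^{1+2\alpha})=O(h^{1+2\alpha})$, consistent with this bound, and splitting $\min\{2,1+2\alpha\}$ at $\alpha=1/2$ gives the stated dichotomy.

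The step I expect to be the main obstacle is the uniform $O(h^2)$ corrector consistency estimate near the singular endpoint $\tau=t_{n+1}$: the weight $(t_{n+1}-\tau)^{\alpha-1}$ is non-integrably steep there, and the product-trapezoidal remainder on the first and last subintervals must be controlled with constants independent of $n$ and not blowing up as $\alpha\to0$. Handling this requires the sharp Diethelm--Ford--Freed boundary estimates rather than a naive Taylor bound, and one must verify that the extra factor $e^{-\lambda(t_{n+1}-\tau)}$, which is $n$-dependent through $t_{n+1}$, keeps uniformly bounded derivatives so that those estimates transfer verbatim from the untempered case.
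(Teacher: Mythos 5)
Your proposal is correct and follows essentially the same route as the paper, whose proof consists only of the citation ``Combining Appendix A of \cite{Deng:07d} with Theorem 2.5 and Lemma 3.1 of \cite{Diethelm:04}'': those are precisely the product-trapezoidal consistency bound, the weight estimates feeding the weakly singular Gronwall argument, and the improved predictor--corrector structure in which the predictor defect enters the corrector through an extra factor $h^{\alpha}$, yielding $O(h^{\min\{2,1+2\alpha\}})$. The only content beyond the cited untempered results is the observation that the factor $e^{-\lambda(t_{n+1}-\tau)}$ can be absorbed into the $C^2$ part of the integrand with uniformly bounded derivatives, which you state and verify explicitly whereas the paper leaves it implicit.
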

\begin{proof}
Combining Appendix A of \cite{Deng:07d} with Theorem 2.5 and Lemma 3.1 of \cite{Diethelm:04}, we can obtain the result of this theorem.
\end{proof}

%-------------------------------------------------
%---------------------------------------------------------------
\subsection{Algorithms for (\ref{e2dot3}) when $\alpha \in (0,2)$}

Now we apply the predictor-corrector algorithms to Eq. (\ref{e2dot3}) for $\alpha \in(0,2)$. To begin, we analyze the properties of the kernel function $y(\tau)=e^{-\lambda (t_{n+1}-\tau)}(t_{n+1}-\tau)^{\alpha-1}
-e^{-\lambda (t_{n}-\tau)}(t_n-\tau)^{\alpha-1}$. 
On one hand, by noticing $\lambda>0$, there is
\begin{eqnarray}
y(\tau)&=&e^{-\lambda (t_{n+1}-\tau)}(t_{n+1}-\tau)^{\alpha-1}
-e^{-\lambda (t_{n}-\tau)}(t_n-\tau)^{\alpha-1}
\nonumber\\
&\leq& e^{-\lambda (t_{n+1}-\tau)}
\left[(t_{n+1}-\tau)^{\alpha-1}-(t_n-\tau)^{\alpha-1}\right];
\end{eqnarray} together with the short memory principle
\cite{Ford:01,Deng:07c} that 
$(t_{n+1}-\tau)^{\alpha-1}-(t_n-\tau)^{\alpha-1}$ decays with the
order $2-\alpha$, we can see that $y(\tau)$ decays faster than exponential order $\lambda$ for fixed $\tau$ when $t_{n}\rightarrow \infty$. On the other hand, it can be seen from
\begin{equation}
\left[e^{-\lambda x}x^{\alpha-1}\right]'=e^{-\lambda x}x^{\alpha-2}\left[(\alpha-1)-\lambda x\right]
\end{equation}
that $y(\tau)<0$ if $\alpha \in(0,1]$; and if $\alpha \in(1,2)$ and when $t_{n+1}-\tau<\frac{\alpha-1}{\lambda}$, i.e., $\tau>t_{n+1}-\frac{\alpha-1}{\lambda}$, there is $y(\tau)>0$; while if $\alpha \in(1,2)$ and when $t_{n}-\tau>\frac{\alpha-1}{\lambda}$, i.e., $\tau<t_{n}-\frac{\alpha-1}{\lambda}$, there is $y(\tau)<0$. Also, $y(\tau)$ is lower bounded. Specifically,
\begin{eqnarray}
y(\tau)&=&e^{-\lambda (t_{n+1}-\tau)}(t_{n+1}-\tau)^{\alpha-1}
-e^{-\lambda (t_{n}-\tau)}(t_n-\tau)^{\alpha-1}
\nonumber\\
&\geq&(t_n-\tau)^{\alpha-1}
\left[e^{-\lambda (t_{n+1}-\tau)}-e^{-\lambda (t_{n}-\tau)}\right]
\nonumber\\
&=&(t_n-\tau)^{\alpha-1}e^{-\lambda (t_{n+1}-\tau)}(1-e^{\lambda h})
\nonumber\\
&=&-(t_n-\tau)^{\alpha-1}e^{-\lambda (t_{n+1}-\tau)}\cdot O(h),
\end{eqnarray}
that is, $y(\tau)$ is very close to zeros when it is negative; thus, we can directly neglect the part when $y(\tau)<0$.

Basing on this fact, similar to the case of dealing with (\ref{e2dot2}) when $\alpha\in (0,1]$, we can explore more efficient ways to approximate the integral $\int_0^{t_n}\left(e^{-\lambda (t_{n+1}-\tau)}(t_{n+1}-\tau)^{\alpha-1}-e^{-\lambda (t_n-\tau)}(t_n-\tau)^{\alpha-1}\right)g(\tau)d \tau$ with trapezoidal quadrature formula. Corresponding to Algorithm \ref{alg:2.1} and Algorithm \ref{alg:2.2}, we provide equal-height as well as equal-area distribution methods for choosing the quadrature nodes in the subsection below.

\subsubsection{Two equidistributing options for selecting the quadrature nodes}

Suppose that those to be chosen points $\{\tau_{i,n}\}$ are sequential as in (\ref{e2dot6}), and we have already got the points $\tau_{i,n}$, $0\leq i<m_n$. By the analysis above, we further let $\tau_{1,n}\geq \min\{t_n,t_{n+1}-\frac{\alpha-1}{\lambda}\}$ for $\alpha \in (1,2)$. Like the equal-height distribution principle in Subsection $2.1$, an intuitive way of selecting the next point $\tau_{i+1,n}$ is, firstly, let
\begin{eqnarray}\label{e2dot19}
&&e^{-\lambda (t_{n+1}-\tilde{\tau}_{i+1,n})}(t_{n+1}-\tilde{\tau}_{i+1,n})^{\alpha-1}
-e^{-\lambda (t_{n}-\tilde{\tau}_{i+1,n})}(t_n-\tilde{\tau}_{i+1,n})^{\alpha-1}
\nonumber\\
&=&e^{-\lambda (t_{n+1}-\tau_{i})^{\alpha-1}}(t_{n+1}-\tau_{i})^{\alpha-1}
-e^{-\lambda (t_{n}-\tau_{i})^{\alpha-1}}(t_n-\tau_{i,n})^{\alpha-1}+(-1)^{\lceil \alpha \rceil}\Delta y,
\end{eqnarray}
where $\Delta y$ is a given small positive real number, and if $\alpha\in(0,1)$, $-\Delta y$ is used, else, if $\alpha \in (1,2)$, it is replaced by $\Delta y$. That is,
\begin{equation}\label{e2dot20}%\small
\begin{array}{l}
e^{-\lambda \left[t_{n+1}-\tau_{i,n}-(\tilde{\tau}_{i+1,n}-\tau_{i,n})\right]}
\left[t_{n+1}-\tau_{i,n}-(\tilde{\tau}_{i+1,n}-\tau_{i,n})\right]^{\alpha-1}
-e^{-\lambda (t_{n+1}-\tau_{i})}(t_{n+1}-\tau_{i})^{\alpha-1}
\\
=e^{-\lambda \left[t_n-\tau_{i,n}-(\tilde{\tau}_{i+1,n}-\tau_{i,n})\right]}
\left[t_n-\tau_{i,n}-(\tilde{\tau}_{i+1,n}-\tau_{i,n})\right]^{\alpha-1}
-e^{-\lambda (t_n-\tau_{i,n})}(t_n-\tau_{i,n})^{\alpha-1}+(-1)^{\lceil \alpha \rceil}\Delta y.
\end{array}
\end{equation}
Since the above equation is nonlinear w.r.t. $\tilde{\tau}_{i+1,n}$, we use its linear part instead to select the wanted point. That is,
\begin{equation}\label{e2dot21}
\tilde{\tau}_{i+1,n} =\max\left\{\begin{array}{l}
\textrm{solve}\left(\tilde{\tau}_{i+1,n}-\tau_{i,n}=h, \tilde{\tau}_{i+1,n}  \right),
\\
\textrm{solve}\big((\tilde{\tau}_{i+1,n}-\tau_{i,n})\left[\lambda(t_{n+1}-\tau_{i,n})^{\alpha-1}
-(\alpha-1)(t_{n+1}-\tau_{i,n})^{\alpha-2}\right]
\\
~~~~~~~=(\tilde{\tau}_{i+1,n}-\tau_{i,n})e^{\lambda h}\left[\lambda(t_{n}-\tau_{i,n})^{\alpha-1}
-(\alpha-1)(t_{n}-\tau_{i,n})^{\alpha-2}\right]
\\
~~~~~~~~~~+(-1)^{\lceil \alpha \rceil}\Delta y e^{\lambda (t_{n+1}-\tau_{i,n})},  \tilde{\tau}_{i+1,n}  \big)
\end{array}
\right\}.
\end{equation}
Then, to avoid being non-equal divided nodes, we take
\begin{equation}\label{e2dot22}
\tau_{i+1,n}=\left\lfloor\frac{\tilde{\tau}_{i+1,n}}{h}\right\rfloor\cdot h.
\end{equation}
%It is easy to check that there is
%\begin{equation}\label{e2dot9}
%(t_{n+1}-\tau_{i+1,n})^{\alpha-1}-(t_{n+1}-\tau_{i,n})^{\alpha-1}\leq\Delta y,
%~~~\textrm{or}~~~
%\tau_{i+1,n}=\tau_{i,n}+h,
%\end{equation}
%holds.
The above descriptions are demonstrated more clearly by Figs. \ref{fig:2.5_add}-\ref{fig:2.6} and Algorithm \ref{alg:2.3}.
\begin{figure}[!htbp]
\includegraphics[scale=0.4]{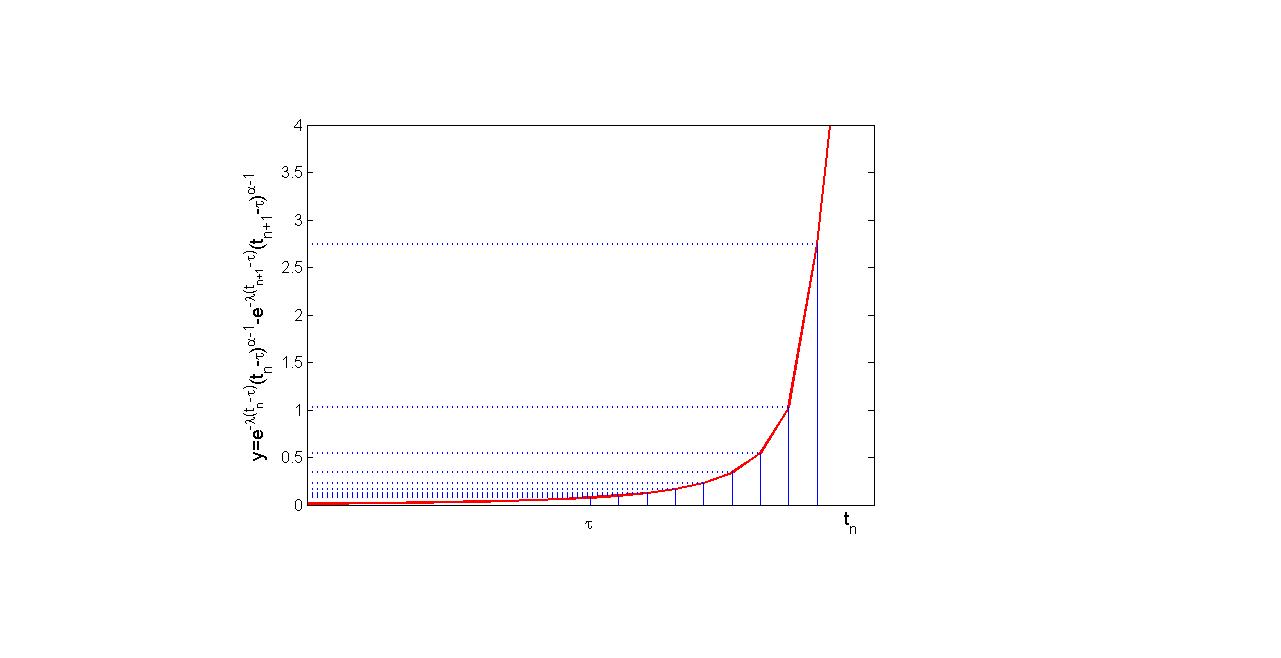}\\
\caption{The relationship between the selected nodes $\{\tau_i\}$ and the function $y=e^{-\lambda (t_{n}-\tau)}(t_{n}-\tau)^{\alpha-1}-
e^{-\lambda (t_{n+1}-\tau)}(t_{n+1}-\tau)^{\alpha-1}$ in the algorithm of equal-height distribution, where $h=1/10$, $\Delta y=h/5$, $t_{n+1}=2$, $\lambda=1$, and $\alpha=0.2$.}\label{fig:2.5_add}
\end{figure}
\begin{figure}[!htbp]
\includegraphics[scale=0.4]{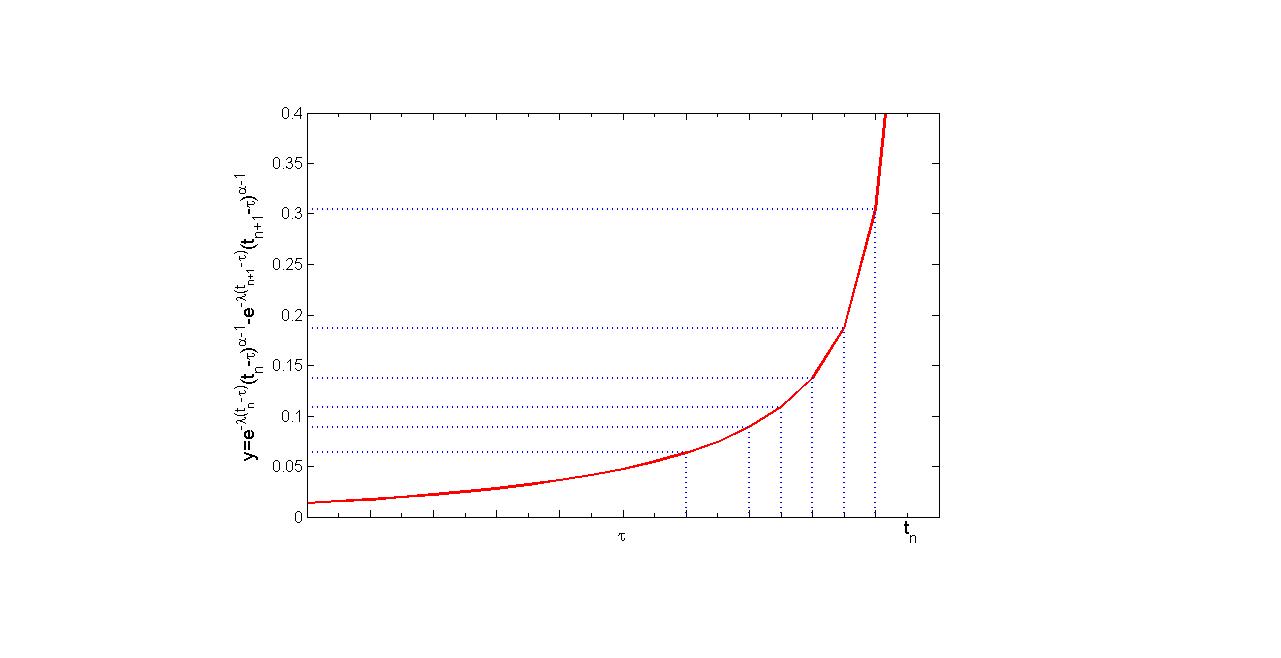}\\
\caption{The relationship between the selected nodes $\{\tau_i\}$ and the function $y=e^{-\lambda (t_{n}-\tau)}(t_{n}-\tau)^{\alpha-1}-
e^{-\lambda (t_{n+1}-\tau)}(t_{n+1}-\tau)^{\alpha-1}$ in the algorithm of equal-height distribution, where $h=1/10$, $\Delta y=h/5$, $t_{n+1}=2$, $\lambda=1$, and $\alpha=0.8$.}\label{fig:2.6_add}
\end{figure}
\begin{figure}[!htbp]
\includegraphics[scale=0.4]{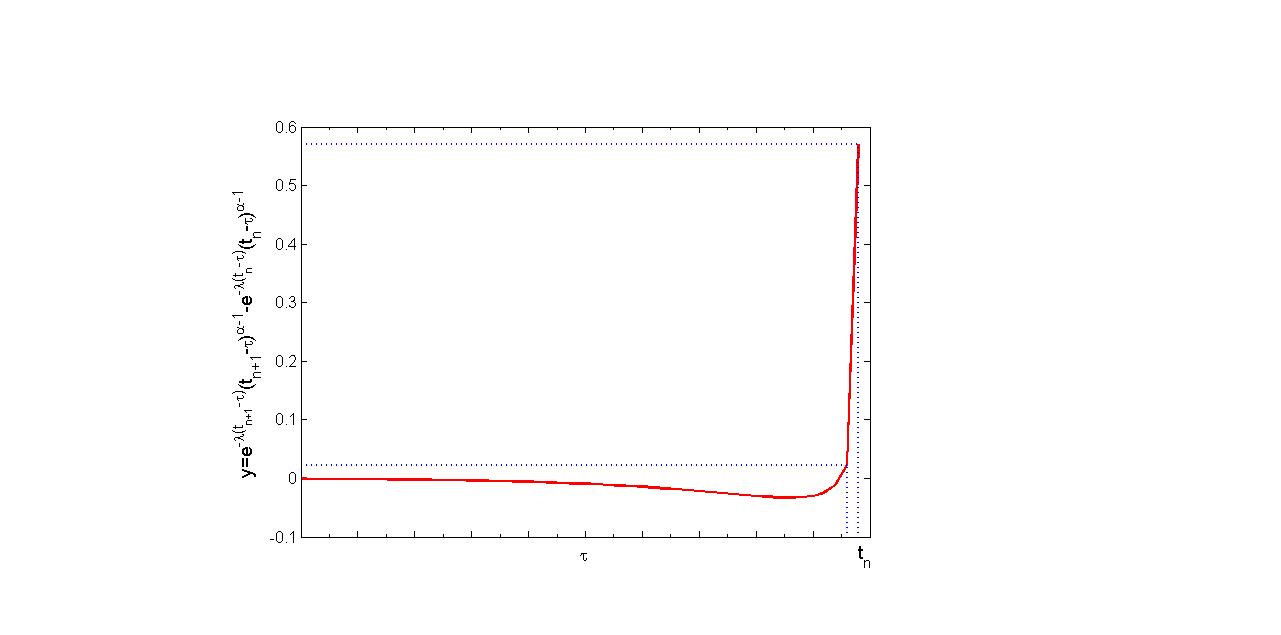}\\
\caption{The relationship between the selected nodes $\{\tau_i\}$ and the function $y=e^{-\lambda (t_{n+1}-\tau)}(t_{n+1}-\tau)^{\alpha-1}-
e^{-\lambda (t_{n}-\tau)}(t_{n}-\tau)^{\alpha-1}$ in the algorithm of equal-height distribution, where $h=1/10$, $\Delta y=h/5$, $t_{n+1}=5$, $\lambda=1$, and $\alpha=1.2$.}\label{fig:2.5}
\end{figure}
\begin{figure}[!htbp]
\includegraphics[scale=0.4]{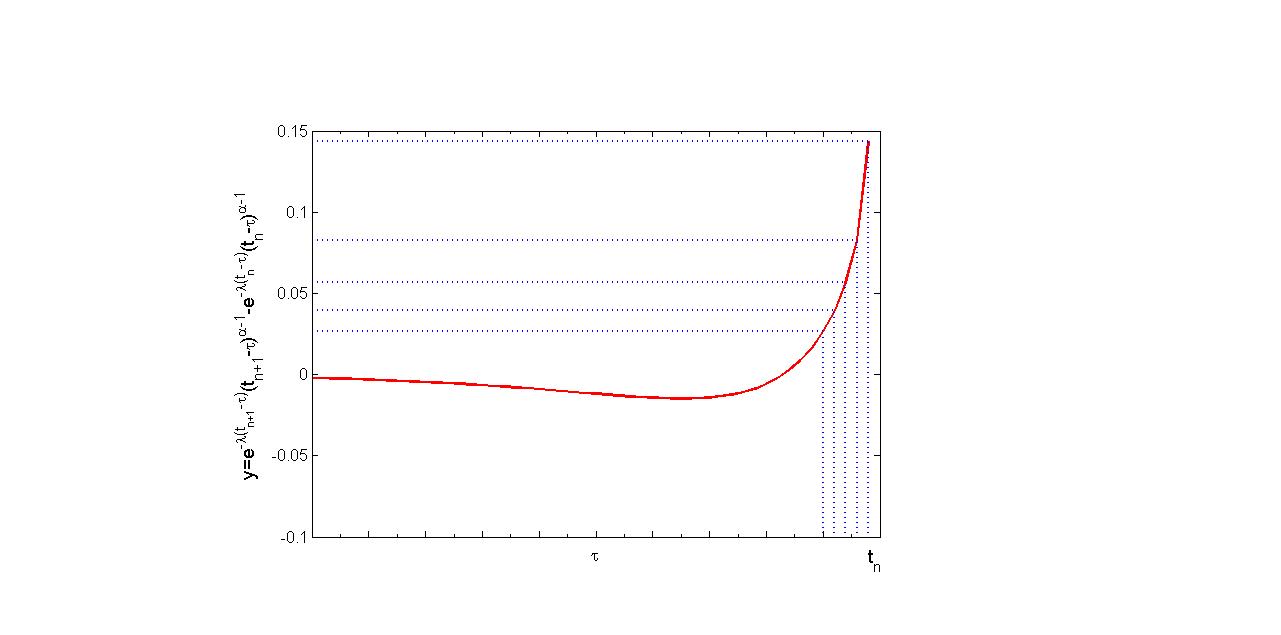}\\
\caption{The relationship between the selected nodes $\{\tau_i\}$ and the function $y=e^{-\lambda (t_{n+1}-\tau)}(t_{n+1}-\tau)^{\alpha-1}-
e^{-\lambda (t_{n}-\tau)}(t_{n}-\tau)^{\alpha-1}$ in the algorithm of equal-height distribution, where $h=1/10$, $\Delta y=h/5$, $t_{n+1}=5$, $\lambda=1$, and $\alpha=1.8$.}\label{fig:2.6}
\end{figure}
%----------------------------------------------
\begin{algorithm}[h]
\caption{The pseudocode of equal-height distribution algorithm for (\ref{e2dot21})-(\ref{e2dot22})}\label{alg:2.3}
\begin{algorithmic}[l]
\STATE $i=0$; $\tau_{i,n}=0$; \% $\tau_{0,n}=0;$\\
\STATE $\tau_c=0$; \% \emph{current node is} $\tau_{0,n}$
\IF {$0<\alpha\leq 1$}
    \STATE $\Delta y=-\Delta y$; \% \emph{if $\alpha \in (0,1]$, take negative value}
\ELSE
    \STATE $\tau_{1,n}=t_{n+1}-\frac{\alpha-1}{\lambda}$;
            \% \emph{start from the point} $t_{n+1}-\frac{\alpha-1}{\lambda}$, \emph{where} $y(\tau)>0$
    \STATE $\tau_{1,n}=\lfloor \tau_{i,n}/h\rfloor \cdot h$;
            \% \emph{let} $\tau_{1,n}$ \emph{belong to the uniform mesh} $\{t_j\}_{j=0}^n$
    \STATE $i=1$; $\tau_c=\tau_{1,n}$; \% \emph{current node is} $\tau_{1,n}$
\ENDIF
\WHILE {$\tau_{c}<t_n$}
    \STATE $\tau_{i+1,n}=\tau_{c}+\frac{\Delta y e^{\lambda (t_{n+1}-\tau_{c})}}
            {\left[\lambda(t_{n+1}-\tau_{c})^{\alpha-1}
            -(\alpha-1)(t_{n+1}-\tau_{c})^{\alpha-2}\right]-e^{\lambda h}
            \left[\lambda(t_{n}-\tau_{c})^{\alpha-1}
            -(\alpha-1)(t_{n}-\tau_{c})^{\alpha-2}\right]}$;
    \IF {$\tau_{i+1,n}>t_n$}
         \STATE $\tau_{i+1,n}=t_n$; %\% \emph{include the node} $t_n$
         \STATE break;
    \ENDIF\\
    \% \emph{if} $y=e^{-\lambda(t_{n+1}-\tau)}(t_{n+1}-\tau)^{\alpha-1}-e^{-\lambda(t_n-\tau)}(t_{n}-\tau)^{\alpha-1}$
                \emph{changes too fast}
    \IF{$\tau_{i+1,n}-\tau_{c}<h$}
        \STATE  $\tau_{i+1,n}=\tau_{c}+h$;
        \% \emph{make} $\tau_{i+1,n}$ \emph{be one step away from} $\tau_{i,n}$
    \ELSE
        \STATE  $\tau_{i+1,n}=\lfloor\tau_{i+1,n}/h\rfloor\ast h$;
                \% \emph{let} $\tau_{i+1,n}$ \emph{belong to the uniform mesh points} $\{t_j\}_{j=0}^n$
    \ENDIF
    \STATE  $\tau_{c}=\tau_{i+1,n}$; $i=i+1$;
\ENDWHILE
\end{algorithmic}
\end{algorithm}
%------------------------------------------------

Similar to the equal-area idea in Subsection $2.1$, here for the second way of choosing the mesh points $\{\tau_{i,n}\}$, we expect firstly that
\begin{equation}\label{e2dot23}
\int_{\tau_i}^{\tilde{\tau}_{i+1}}
 \left[ e^{-\lambda (t_{n+1}-\tau)}(t_{n+1}-\tau)^{\alpha-1}
-e^{-\lambda (t_{n}-\tau)}(t_{n}-\tau)^{\alpha-1}\right]d\tau
=(-1)^{\lceil \alpha \rceil}\Delta s,
\end{equation}
where $\Delta s$ is a given small positive real number, and if $\alpha \in (0,1)$, $-\Delta s$ is used, else, if $\alpha \in (1,2)$, it is replaced by $\Delta s$.
%That is,
%\begin{eqnarray}\label{e2dot24}
%&&\left[
%t_{n+1}-\tau_{i,n})^{\alpha}-(t_{n+1}-\tau_{i,n}-(\tilde{\tau}_{i+1,n}-\tau_{i,n})
%\right]^{\alpha}
%\nonumber\\
%&=&\left[
%t_{n}-\tau_{i,n})^{\alpha}-(t_{n}-\tau_{i,n}-(\tilde{\tau}_{i+1,n}-\tau_{i,n})
%\right]^{\alpha}
%+\alpha \Delta s.
%\end{eqnarray}
Again, since it is a nonlinear equations w.r.t. $\tilde{\tau}_{i+1,n}$, we approximate it by
\begin{equation}
(t_{n+1}-\tau_{i,n})^{\alpha-1}\int_{\tau_i}^{\tilde{\tau}_{i+1}}
\left[ e^{-\lambda (t_{n+1}-\tau)}-e^{-\lambda (t_{n}-\tau)}\right]d\tau
=(-1)^{\lceil \alpha \rceil}\Delta s.
\end{equation}
So, let
\begin{equation}\label{e2dot25}
%\begin{array}{l}
\tilde{\tau}_{i+1,n}=
%\\
 \max\left\{
\begin{array}{l}
\textrm{solve}\left(\tilde{\tau}_{i+1,n}-\tau_{i,n}=h, \tilde{\tau}_{i+1,n}\right),
\\
\textrm{solve}\big(\lambda(\tilde{\tau}_{i+1,n}-\tau_{i,n})
\left[(t_{n+1}-\tau_{i,n})^{\alpha-1}
-e^{\lambda h}(t_{n}-\tau_{i,n})^{\alpha-1}\right]
\\~~~~~~~=(-1)^{\lceil \alpha \rceil}\Delta s \lambda e^{\lambda (t_{n+1}-\tau_{i,n})}, \tilde{\tau}_{i+1,n}\big)
\end{array}
\right\}.
%\end{array}
\end{equation}
Then by taking
\begin{equation}\label{e2dot26}
\tau_{i+1,n}=\left\lfloor\frac{\tilde{\tau}_{i+1,n}}{h}\right\rfloor\cdot h,
\end{equation}
$\tau_{i+1,n}$ belongs to uniform grid points $\{t_j\}_{j=0}^{n}$.
%Also, there is
%\begin{equation}\label{e2dot12}
%(t_{n+1}-\tau_{i,n})^{\alpha}\leq(t_{n+1}-\tau_{i+1,n})^{\alpha-1}+\alpha\Delta s,
%~~~\textrm{or}~~~
%\tau_{i+1,n}=\tau_{i,n}+h,
%\end{equation}
%holds.
We can have a better view of this equal-area distribution criterion of selecting the mesh $\{\tau_{i,n}\}$ from Figs. \ref{fig:2.7_add}-\ref{fig:2.8} and Algorithm \ref{alg:2.4}.
\begin{figure}[!htbp]
\includegraphics[scale=0.4]{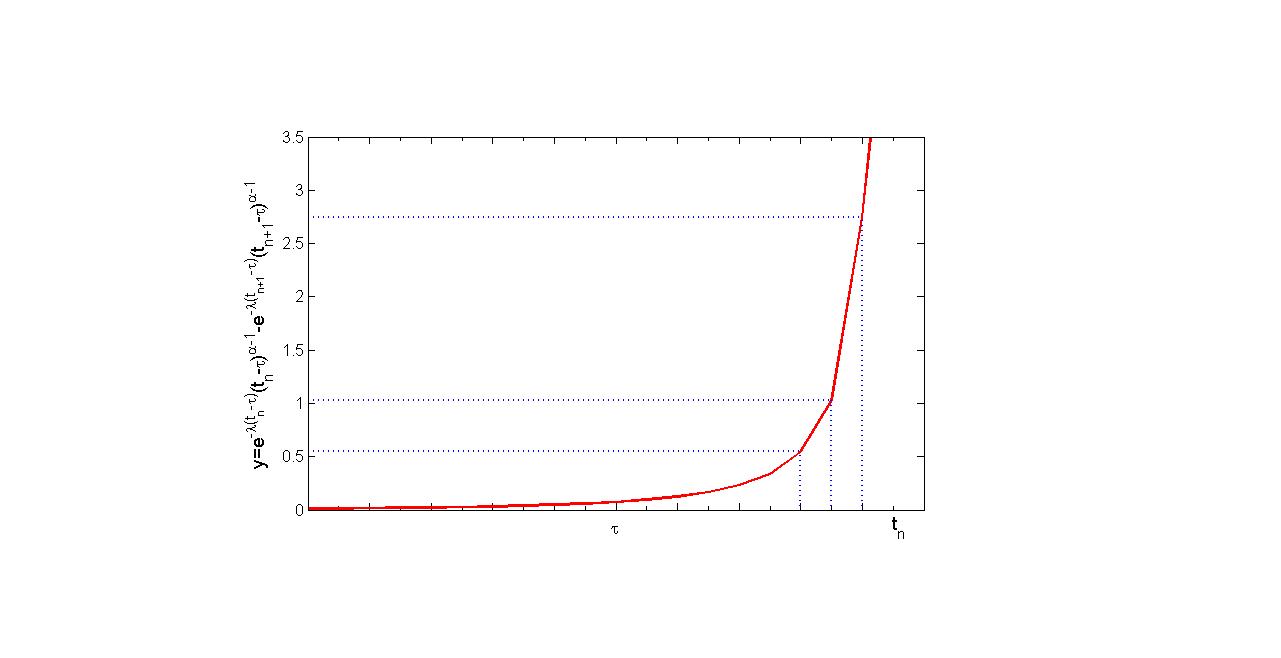}\\
\caption{The relationship between the selected nodes $\{\tau_i\}$ and the function $y=e^{-\lambda (t_{n}-\tau)}(t_{n}-\tau)^{\alpha-1}
-e^{-\lambda (t_{n+1}-\tau)}(t_{n+1}-\tau)^{\alpha-1}$ in the algorithm of equal-area distribution, where $h=1/10$, $\Delta s=h/5$, $t_{n+1}=2$, $\lambda=1$, and $\alpha=0.2$.}\label{fig:2.7_add}
\end{figure}
\begin{figure}[!htbp]
\includegraphics[scale=0.4]{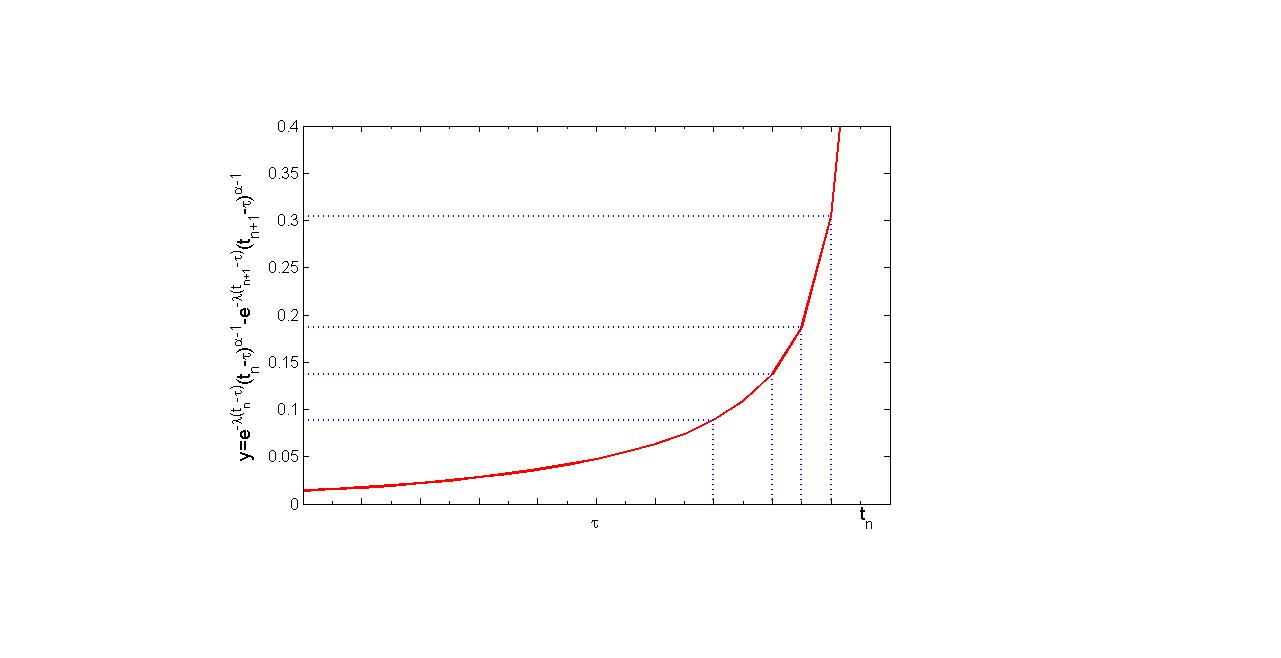}\\
\caption{The relationship between the selected nodes $\{\tau_i\}$ and the function $y=e^{-\lambda (t_{n}-\tau)}(t_{n}-\tau)^{\alpha-1}
-e^{-\lambda (t_{n+1}-\tau)}(t_{n+1}-\tau)^{\alpha-1}$ in the algorithm of equal-area distribution, where $h=1/10$, $\Delta s=h/5$, $t_{n+1}=2$, $\lambda=1$, and $\alpha=0.8$.}\label{fig:2.8_add}
\end{figure}
\begin{figure}[!htbp]
\includegraphics[scale=0.4]{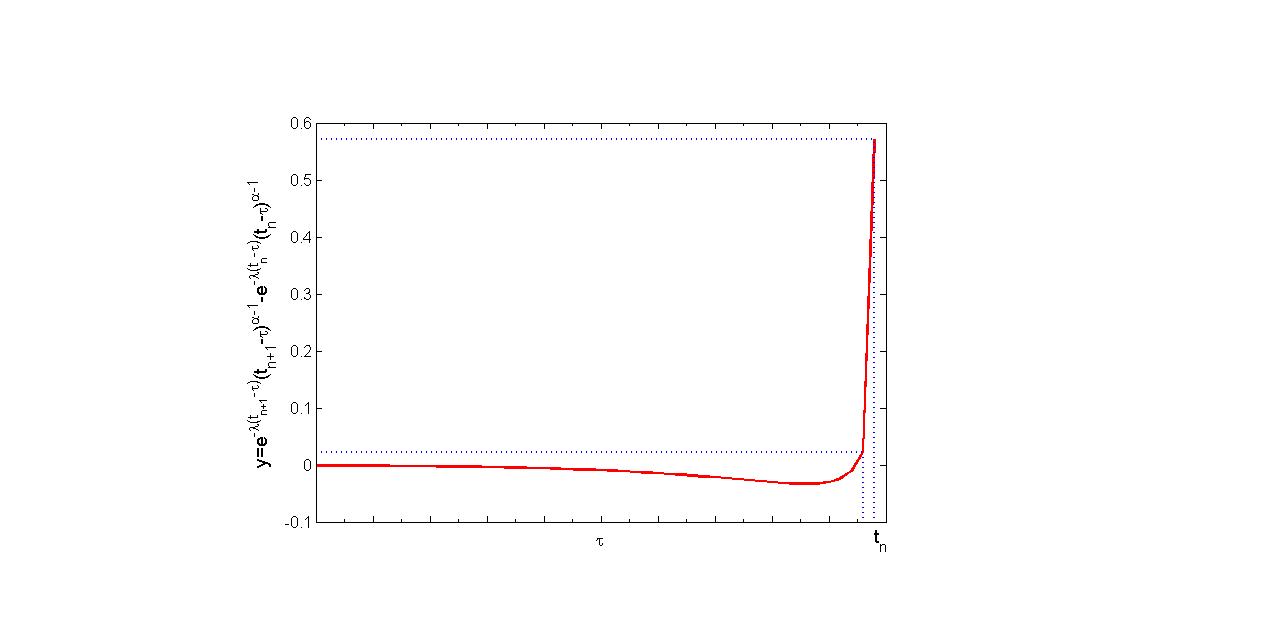}\\
\caption{The relationship between the selected nodes $\{\tau_i\}$ and the function $y=e^{-\lambda (t_{n+1}-\tau)}(t_{n+1}-\tau)^{\alpha-1}
-e^{-\lambda (t_{n}-\tau)}(t_{n}-\tau)^{\alpha-1}$ in the algorithm of equal-area distribution, where $h=1/10$, $\Delta s=h/5$, $t_{n+1}=5$, $\lambda=1$, and $\alpha=1.2$.}\label{fig:2.7}
\end{figure}
\begin{figure}[!htbp]
\includegraphics[scale=0.4]{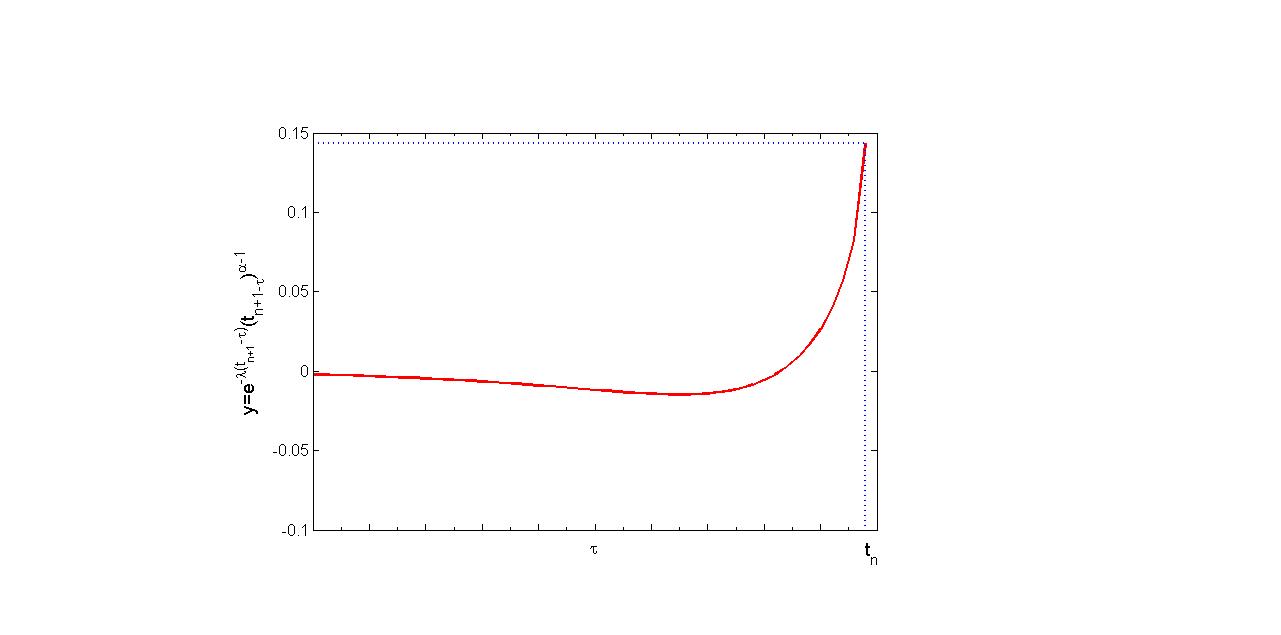}\\
\caption{The relationship between the selected nodes $\{\tau_i\}$ and the function $y=e^{-\lambda (t_{n+1}-\tau)}(t_{n+1}-\tau)^{\alpha-1}
-e^{-\lambda (t_{n}-\tau)}(t_{n}-\tau)^{\alpha-1}$ in the algorithm of equal-area distribution, where $h=1/10$, $\Delta s=h/5$, $t_{n+1}=5$, $\lambda=1$, and $\alpha=1.8$.}\label{fig:2.8}
\end{figure}
%----------------------------------------------
\begin{algorithm}[h]
\caption{The pseudocode of equal-area distribution algorithm for (\ref{e2dot25})-(\ref{e2dot26})}\label{alg:2.4}
\begin{algorithmic}[l]
\STATE $i=0$; $\tau_{i,n}=0$; \% $\tau_{0,n}=0;$\\
\STATE $\tau_c=0$; \% \emph{current node is} $\tau_{0,n}$
\IF {$0<\alpha\leq 1$}
    \STATE $\Delta s=-\Delta s$; \% \emph{if $\alpha \in (0,1]$, take negative value}
\ELSE
    \STATE $\tau_{1,n}=t_{n+1}-\frac{\alpha-1}{\lambda}$;
            \% \emph{start from the point} $t_{n+1}-\frac{\alpha-1}{\lambda}$, \emph{where} $y(\tau)>0$
    \STATE $\tau_{1,n}=\lfloor \tau_{i,n}/h\rfloor \cdot h$;
            \% \emph{let} $\tau_{1,n}$ \emph{belong to the uniform mesh} $\{t_j\}_{j=0}^n$
    \STATE $i=1$; $\tau_c=\tau_{1,n}$; \% \emph{current node is} $\tau_{1,n}$
\ENDIF
\WHILE {$\tau_{c}<t_n$}
    \STATE $\tau_{i+1,n}=\tau_{c}+
            \frac{\Delta s e^{\lambda (t_{n+1}-\tau_{c})}}
            {\left[(t_{n+1}-\tau_{c})^{\alpha-1}
            -e^{\lambda h}(t_{n}-\tau_{c})^{\alpha-1}\right]}$;
    \IF {$\tau_{i+1,n}>t_n$}
         \STATE $\tau_{i+1,n}=t_n$; %\% \emph{include the node} $t_n$
         \STATE break;
    \ENDIF\\
    \% \emph{if} $y=e^{-\lambda(t_{n+1}-\tau)}(t_{n+1}-\tau)^{\alpha-1}-e^{-\lambda(t_n-\tau)}(t_{n}-\tau)^{\alpha-1}$
                \emph{changes too fast}
    \IF{$\tau_{i+1,n}-\tau_{c}<h$}
        \STATE  $\tau_{i+1,n}=\tau_{c}+h$;
        \% \emph{make} $\tau_{i+1,n}$ \emph{be one step away from} $\tau_{i,n}$
    \ELSE
        \STATE  $\tau_{i+1,n}= \lfloor \tau_{i+1,n}/h \rfloor \ast h $;
                \% \emph{let} $\tau_{i+1,n}$ \emph{belong to the uniform mesh points} $\{t_j\}_{j=0}^n$
    \ENDIF
    \STATE  $\tau_{c}=\tau_{i+1,n}$; $i=i+1$;
\ENDWHILE
\end{algorithmic}
\end{algorithm}
%------------------------------------------------
%\begin{remark}\label{remark:2.2}
%From Figs. \ref{fig:2.5}-\ref{fig:2.8}, we can see that as $\alpha$ turns to $1$, for fixed $h$ and fixed $\Delta y$ in the equal-height distribution method or fixed $\Delta s$ in the equal-area one, unlike $y(\tau)=(t_{n+1}-\tau)^{\alpha-1}$ for $\alpha\in(0,1]$, the function $y(\tau)=(t_{n+1}-\tau)^{\alpha-1}-(t_{n}-\tau)^{\alpha-1}$ changes fast at the nearby of the singular point $t_{n+1}$, while grows slower at the places away from $t_{n+1}$. Thus, here for $\alpha \in (1,2)$, the
%larger $\alpha$ is, the more the quadrature nodes $\{\tau_{i,n}\}$ be selected. Anyway, this is also different from the predictor-corrector approach proposed in \cite{Diethelm:02a,Diethelm:02b,Diethelm:04} or the improved version in \cite{Deng:07a}, in which the integral $\int_{0}^{t_n}\cdot d\tau$ is always approximated on the fixed mesh $\{t_j\}_{j=0}^n$ and the nodes number is unrelated to the change of $\alpha$.
%\end{remark}

\subsubsection{Predictor-corrector approach for (\ref{e2dot3}) when $\alpha \in (0,2)$}

We still denote the mesh nodes $\{\tau_{i,n}\}$ chosen from Algorithm \ref{alg:2.3} or Algorithm \ref{alg:2.4} as done in (\ref{e2dot13}).
Also, by noticing that $t_{n_0}=0$ and $t_{n_{m_n}}=t_n$, from (\ref{e2dot15}), we can get
\begin{eqnarray}\label{e2dot32}
&&\int_0^{t_{n}}\left[e^{-\lambda (t_{n+1}-\tau)}(t_{n+1}-\tau)^{\alpha-1}
-e^{-\lambda (t_{n}-\tau)}(t_{n+1}-\tau)^{\alpha-1}\right]g(\tau)d \tau
\nonumber\\
&=&\sum_{i=0}^{m_n-1}\int_{\tau_{i,n}}^{\tau_{i+1,n}}
e^{-\lambda (t_{n+1}-\tau)}(t_{n+1}-\tau)^{\alpha-1}g(\tau) d\tau
-\sum_{i=0}^{m_n-1}\int_{\tau_{i,n}}^{\tau_{i+1,n}}
e^{-\lambda (t_{n}-\tau)}(t_{n}-\tau)^{\alpha-1}g(\tau) d\tau
\nonumber\\
&\approx&\sum_{i=0}^{m_n-1}\int_{\tau_{i,n}}^{\tau_{i+1,n}} (t_{n+1}-\tau)^{\alpha-1}
\frac{e^{-\lambda (t_{n+1}-\tau_{i+1,n})}g(\tau_{i+1,n})(\tau-\tau_{i,n})
+e^{-\lambda (t_{n+1}-\tau_{i,n})}g(\tau_{i,n})(\tau_{i+1,n}-\tau)}{\tau_{i+1,n}-\tau_{i,n}}
d\tau
\nonumber\\
&&-\sum_{i=0}^{m_n-1}\int_{\tau_{i,n}}^{\tau_{i+1,n}} (t_{n}-\tau)^{\alpha-1}
\frac{e^{-\lambda (t_{n}-\tau_{i+1,n})}g(\tau_{i+1,n})(\tau-\tau_{i,n})
+e^{-\lambda (t_{n}-\tau_{i,n})}g(\tau_{i,n})(\tau_{i+1,n}-\tau)}{\tau_{i+1,n}-\tau_{i,n}}
d\tau
\nonumber\\
&=&\frac{h^\alpha}{\alpha(\alpha+1)}\sum_{i=0}^{m_n} (a_{i,n+1}-c_{i,n+1})g(t_{n_{i}}),
\end{eqnarray}
where $\{a_{i,n+1}\}_{i=0}^{m_n}$ are defined in (\ref{e2dot17}), and the definitions of $c_{i,n+1}$, $i=0,1,\cdots,m_n$, are obtained by replacing $n+1$ of (\ref{e2dot17}) corresponding to $a_{i,n+1}$, $i=0,1,\cdots,m_n$, with $n$.
%and $c_{i,n+1}$, $i=0,1,\cdots,m_n$, are obtained by replacing $n+1$ of $a_{i,n+1}$, $i=0,1,\cdots,m_n$, with $n$. 
However, it should be noted that $c_{i,n+1}\neq a_{i,n}$, because $n_k\neq (n-1)_k$ for $k=1,2,\cdots$.

Assume that we have calculated the approximations $x_j\approx x(t_j)$, $j=1,2,\cdots,n$. Then by combining with the one step approximations (\ref{e2dot4}) and (\ref{e2dot5}), the predictor-corrector approach to compute $x_{n+1}\approx x(t_{n+1})$ for $\alpha\in(0,2)$ can be yielded as

{\bf Case 1 ($n=0$):}
\begin{equation}\label{e2dot33}
\left\{
\begin{array}{l}
\displaystyle x_1^{Pr}=x_0(t_1)+\frac{h^\alpha}{\Gamma(\alpha+1)} e^{-\lambda h}f(0,x_0), \\
 \\
\displaystyle x_1=x_0(t_1)+\frac{h^\alpha}{\Gamma(\alpha+2)}\left[f(h,x_1^{Pr})
+\alpha e^{-\lambda h} f(0,x_0)\right];
\end{array}
\right.
\end{equation}

{\bf Case 2 ($n\geq 1$):}
\begin{equation}\label{e2dot34}
\left\{
\begin{array}{ll}
x_{n+1}^{Pr} & =  \displaystyle x_0(t_{n+1})+x_n-x_0(t_n)+\frac{h^\alpha}{\Gamma(\alpha+2)}\cdot\\
\displaystyle &
\left[\sum\limits_{i=0}^{m_n-1} (a_{i,n+1}-c_{i,n+1})f(t_i,x_{n_i})
+\left[a_{m_n,n+1}-c_{m_n,n+1}+(\alpha+1)e^{-\lambda h}\right] f(t_n,x_n)\right], \\
\\
x_{n+1} & =\displaystyle x_0(t_{n+1})+x_n-x_0(t_n)+\frac{h^\alpha}{\Gamma(\alpha+2)}\cdot\\
\displaystyle &
\left[\sum\limits_{i=0}^{m_n-1} (a_{i,n+1}-c_{i,n+1})f(t_i,x_{n_i})
+(a_{m_n,n+1}-c_{m_n,n+1}+\alpha e^{-\lambda h}) f(t_n,x_n)
+f(t_{n+1},x_{n+1}^{Pr})\right].
\end{array}
\right.
\end{equation}
%-----------------------------------------------------

\section{Numerical Examples}\label{sec:3}

\begin{example}\label{example:3.1}
Our first example deals with the case that the unknown solution $x(t)$ has a smooth derivative of order $\alpha$. Specifically, we shall consider the
following equation as in \cite{Diethelm:02a,Diethelm:04}:
\begin{equation}\label{equa3.1}
_{0}^{C}D_{t}^{\alpha,\lambda} x(t)=
e^{-\lambda t}\left[\frac{\Gamma(9)}{\Gamma{(9-\alpha)}}t^{8-\alpha}-3\frac{\Gamma{(5+\alpha/2)}}{\Gamma{(5-\alpha/2)}} t^{4-\alpha/2}+
\frac{9}{4}\Gamma{(\alpha+1)}+\big(\frac{3}{2}t^{\alpha/2}-t^4\big)^3
-(e^{\lambda t}x)^{3/2}\right],
\end{equation}
with the initial condition(s) $x(0)=0$ (and $\left[e^{\lambda t}x(t)\right]'\big|_{t=0}=0$ if $1<
\alpha \leq 2 $).
\end{example}

The exact solution of this initial value problem is
\begin{equation}\label{equa3.2}
x(t)=e^{-\lambda t}\left(t^{8}-3t^{4+\alpha/2}+\frac{9}{4}t^\alpha\right),
\end{equation}
so, the right function $_{0}^{C}D_{t}^{\alpha,\lambda} x(t)=\frac{\Gamma(9)}{\Gamma{(9-\alpha)}}t^{8-\alpha}-3\frac{\Gamma{(5+\alpha/2)}}{\Gamma{(5-\alpha/2)}} t^{4-\alpha/2}+\frac{9}{4}\Gamma{(\alpha+1)}\in C^2[0,T]$ for arbitrary $T>0$ and $\alpha\in(0,2)$.

Table \ref{table3.1} verifies that, by choosing suitable parameters $\Delta y$ or $\Delta s$ for different $\alpha\in(0,1]$, the equidistributing predictor-corrector methods can reach to the convergence order $\min\{1+2\alpha,2\}$ as given in Theorem 1. Also, it can be seen that when the time interval is not very large ($T=1$) and $\lambda=1$, the equal-area distribution predictor-corrector methods have already shown their benefits in computation cost for $\alpha\in(0,1]$. As for the case of $\alpha\in(1,2)$, both equidistributing methods show their huge superiority in the aspects of numerical accuracy as well as computation cost. Noting that to compute $x_{n+1}$, the least three required quadrature nodes are $0,~t_{n}$, and $t_{n+1}$,  except when $n=0$, only two points $0$ and $t_1$ are employed. So, given $h$ and $T$, the total times of the quadrature nodes being used in the procedure is $2+3(\frac{T}{h}-1)$, i.e., $29$, $59$, $119$, $239$, and $479$, respectively, when $h=1/10$, $1/20$, $1/40$, $1/80$, $1/160$, and $T=1$, which coincides with the values of $M$ in Table \ref{table3.1}; while  the total times of the quadrature nodes being used for nonequidistributing scheme (\ref{e2dot15}) and (\ref{e2dot199})-(\ref{e2dot200}) are $65$, $230$, $860$, $3320$, and $13040$, separately. That means, when $\alpha\in(1,2)$, while keeping high numerical accuracy, the computation cost of the equidistributing methods are linearly increasing with time, comparing with the $O(h^{-2})$ expenditure in the predictor-corrector methods of \cite{Diethelm:02a,Deng:07d}.
\begin{remark}
For not losing accuracy, the basic strategy of choosing the values of $\Delta y$ or $\Delta s$ is: 1) for the algorithms generated from (\ref{e2dot2}) where $\alpha\in(0,1)$, if $\alpha$ is close to $1$, $\Delta y$ or $\Delta s$ is approximately equal to $h$, and if $\alpha$ is close to $0$, $\Delta y$ or $\Delta s$ can be bigger than $h$, say, $10h$; 2) for the algorithms generated from (\ref{e2dot3}) where $\alpha \in (0,2)$,  when $\alpha$ is closer to $1$, $\Delta y$ or $\Delta s$ should be smaller (much smaller than $h$ if $\alpha\in (0,1)$ and around $h$ if $\alpha \in (1,2)$).
\end{remark}
%=========================================
\begin{table}\small
\caption{The maximum errors ($e_{max}$), convergence rates (CO), and the total times of quadrature nodes being used ($M$) of Example \ref{example:3.1} at $T=1$ and $\lambda=1$,
by using the equal-height distribution as well as the equal-area distribution ones,
for different $\alpha$, $h$, $\Delta y$, and $\Delta s$, respectively.}\label{table3.1}
\begin{tabular}{ccccccccccc}
\hline\noalign{\smallskip}
\multicolumn{2}{c}{$\alpha$}  &\multicolumn{3}{c}{$\alpha=0.2~(\Delta y=\Delta s=10 h)$} &\multicolumn{3}{c}{$\alpha=0.5~(\Delta y=\Delta s=\frac{5h}{2})$}
&\multicolumn{3}{c}{$\alpha=0.8~(\Delta y=\Delta s=h)$}\\
\noalign{\smallskip}\hline\noalign{\smallskip}
Method & $h$ & $e_{max}$ & CO & M & $e_{max}$ & CO & M & $e_{max}$ & CO & M\\
\noalign{\smallskip}\hline\noalign{\smallskip}
Equal-      &1/10 &5.83 1e-1 &-    &32   &2.87 1e-2 &-    &57   &8.23 1e-3 &-    &64    \\
height      &1/20 &2.01 1e-1 &1.52 &148  &6.29 1e-3 &2.19 &205  &1.62 1e-3 &2.35 &228  \\
            &1/40 &4.04 1e-2 &2.32 &588  &1.44 1e-3 &2.12 &777  &3.44 1e-4 &2.23 &854  \\
of (2.2)     &1/80 &9.59 1e-3 &2.08 &2327 &3.42 1e-4 &2.08 &3034 &8.22 1e-5 &2.07 &3300 \\
            &1/160&2.78 1e-3 &1.78 &9214 &8.24 1e-5 &2.05 &11938&1.98 1e-5 &2.06 &12976\\
\noalign{\smallskip}\hline\noalign{\smallskip}
Equal-      &1/10 &5.84 1e-1 &-    &29   &3.29 1e-2 &-    &43   &8.23 1e-3 &-    &61    \\
area        &1/20 &2.00 1e-1 &1.55 &59   &6.80 1e-3 &2.28 &160  &1.62 1e-3 &2.35 &221  \\
            &1/40 &4.07 1e-2 &2.30 &287  &1.56 1e-3 &2.12 &609  &3.34 1e-4 &2.27 &830  \\
of (2.2)    &1/80 &9.68 1e-3 &2.07 &1206 &3.64 1e-4 &2.10 &2379 &7.23 1e-5 &2.21 &3210 \\
            &1/160&2.81 1e-3 &1.79 &4885 &8.74 1e-5 &2.06 &9390 &1.73 1e-5 &2.07 &12620\\
\midrule
\midrule
\multicolumn{2}{c}{$\alpha$}  &\multicolumn{3}{c}{$\alpha=0.2~(\Delta y=\frac{h}{2},~\Delta s=h)$} &\multicolumn{3}{c}{$\alpha=0.5~(\Delta y=\frac{h}{10},~\Delta s=\frac{h}{5})$}
&\multicolumn{3}{c}{$\alpha=0.8~(\Delta y=\frac{h}{50},~\Delta s=\frac{h}{16})$}\\
\noalign{\smallskip}\hline\noalign{\smallskip}
Equal-      &1/10 &5.70 1e-1 &-    &63   &2.86 1e-2 &-    &65   &8.23 1e-3 &-    &65    \\
height      &1/20 &2.01 1e-1 &1.50 &208  &6.29 1e-3 &2.19 &230  &1.62 1e-3 &2.35 &230  \\
            &1/40 &4.04 1e-2 &2.32 &700  &1.44 1e-3 &2.12 &828  &3.49 1e-4 &2.21 &860  \\
of (2.3)     &1/80 &9.59 1e-3 &2.08 &2370 &3.43 1e-4 &2.07 &2899 &8.03 1e-5 &2.12 &3278 \\
            &1/160&2.79 1e-3 &1.78 &7887 &8.71 1e-5 &1.98 &9752 &1.67 1e-5 &2.26 &11521\\
\noalign{\smallskip}\hline\noalign{\smallskip}
Equal-      &1/10 &5.81 1e-1 &-    &35   &2.44 1e-2 &-    &65   &7.77 1e-3 &-    &69    \\
area        &1/20 &2.00 1e-1 &1.54 &86   &6.38 1e-3 &1.93 &182  &1.44 1e-3 &2.43 &223  \\
            &1/40 &6.50 1e-2 &1.62 &235  &2.82 1e-3 &1.18 &529  &6.00 1e-4 &1.26 &657  \\
of (2.3)    &1/80 &8.35 1e-3 &2.96 &658  &3.84 1e-4 &2.87 &1523 &1.52 1e-4 &1.98 &1757 \\
            &1/160&4.72 1e-4 &4.15 &1903 &2.23 1e-5 &4.11 &4112 &1.54 1e-5 &3.31 &4315\\
\midrule
\midrule
\multicolumn{2}{c}{$\alpha$}  &\multicolumn{3}{c}{$\alpha=1.2~(\Delta y=\Delta s=10 h)$} &\multicolumn{3}{c}{$\alpha=1.5~(\Delta y=\Delta s=10 h)$}
&\multicolumn{3}{c}{$\alpha=1.8~(\Delta y=\Delta s=10 h)$}\\
\noalign{\smallskip}\hline\noalign{\smallskip}
Method & $h$ & $e_{max}$ & CO & M & $e_{max}$ & CO & M & $e_{max}$ & CO & M\\
\noalign{\smallskip}\hline\noalign{\smallskip}
Equal-       &1/10 &1.04 1e-4 &-    &29   &9.00 1e-5 &-    &29   &7.40 1e-5 &-    &29    \\
height       &1/20 &4.46 1e-6 &4.54 &59   &3.51 1e-6 &4.68 &59   &2.61 1e-6 &4.83 &59  \\
             &1/40 &1.88 1e-7 &4.57 &119  &1.34 1e-7 &4.71 &119  &8.95 1e-8 &4.86 &119  \\
             &1/80 &7.86 1e-9 &4.58 &239  &5.03 1e-9 &4.73 &239  &3.04 1e-9 &4.88 &239 \\
             &1/160&3.26 1e-10&4.59 &479  &1.88 1e-10&4.74 &479  &1.02 1e-10&4.89 &479\\
\noalign{\smallskip}\hline\noalign{\smallskip}
Equal-      &1/10 &1.04 1e-4 &-    &29   &9.00 1e-5 &-    &29   &7.40 1e-5 &-    &31    \\
area        &1/20 &4.46 1e-6 &4.54 &59   &3.51 1e-6 &4.68 &59   &2.61 1e-6 &4.83 &61  \\
            &1/40 &1.88 1e-7 &4.57 &119  &1.34 1e-7 &4.71 &119  &8.95 1e-8 &4.86 &122  \\
            &1/80 &7.86 1e-9 &4.58 &239  &5.03 1e-9 &4.73 &240  &3.04 1e-9 &4.88 &246 \\
            &1/160&3.26 1e-10&4.59 &479  &1.88 1e-10&4.74 &482  &1.02 1e-10&4.89 &493\\
\noalign{\smallskip}\hline
\end{tabular}
\end{table}

%===============================================
\begin{example}\label{example:3.2}
Next we come to the case that the unknown solution $x(t)$ itself is a smooth function, but the given function $f(t,x(t))$ has weak regularity. Specifically, we consider the linear equation:
\begin{equation}\label{equa3.3}
_{0}^{C}D_{t}^{\alpha,\lambda} x(t)=\left\{
\begin{array}{ll}
e^{-\lambda t}\left[\frac{2}{\Gamma{(3-\alpha)}}t^{2-\alpha}-e^{\lambda t}x(t)+t^2-t\right],& ~\textrm{for}~ \alpha>1,\\
e^{-\lambda t}\left[\frac{2}{\Gamma{(3-\alpha)}}t^{2-\alpha}-\frac{1}{\Gamma{(2-\alpha)}}t^{1-\alpha}-e^{\lambda t}x(t)+t^2-t\right],
& ~\textrm{for}~ \alpha\leq 1,
\end{array}
\right.
\end{equation}
with the initial condition(s) $x(0)=0$ (and $\left[e^{\lambda t}x(t)\right]'\big|_{t=0}=-1$ if $1<\alpha \leq 2 $).
\end{example}

The exact solution of this initial value problem is
\begin{equation}\label{equa3.4}
x(t)=e^{-\lambda t}(t^2-t),
\end{equation}
so, the right function
\begin{equation}\label{equa3.5}
_{0}^{C}D_{t}^{\alpha,\lambda} x(t)=\left\{
\begin{array}{ll}
\frac{2}{\Gamma{(3-\alpha)}}t^{2-\alpha},& ~\textrm{for}~ \alpha>1,\\
\frac{2}{\Gamma{(3-\alpha)}}t^{2-\alpha}-\frac{1}{\Gamma{(2-\alpha)}}t^{1-\alpha},& ~\textrm{for}~ \alpha\leq 1,
\end{array}
\right.
\end{equation}
is continuous, but its first order derivative is infinite at $t=0$.

Table \ref{table3.2} shows that when the time interval is larger ($T=5$), the advantage of computation cost for equidistributing schemes becomes more obvious. For $\alpha\in(1,2)$, the proposed methods of this paper possess more obvious advantages; they can even reach to the convergence order of $2$, although the function $f(t,x(t))$ has a weak regularity. Moreover, as discussed in Example \ref{example:3.1},  the total times of the quadrature nodes being used are nearly equal to $2+3(\frac{T}{h}-1)$ for $\alpha\in(1,2)$, which means the computation cost of the equidistributing methods increases linearly with the time evolution. We can have a better view from Fig. \ref{fig:3.1} with $\alpha=0.5$ and Fig. \ref{fig:3.2} with $\alpha=1.5$, where the equidistributing methods manage to linear growth of CPU time for long time interval ($T=50$). In this section, all numerical computations are done in Matlab 7.11.0 on a normal laptop with 1GB of memory.

%=========================================
\begin{table}\small
\caption{The maximum errors ($e_{max}$), convergence rates (CO), and the total times of quadrature nodes being used ($M$) of Example \ref{example:3.2} at $T=5$ and $\lambda=1$,
by using the equal-height distribution as well as the equal-area distribution ones,
for different $\alpha$, $h$, $\Delta y$, and $\Delta s$, respectively.}\label{table3.2}
\begin{tabular}{ccccccccccc}
\hline\noalign{\smallskip}
\multicolumn{2}{c}{$\alpha$}  &\multicolumn{3}{c}{$\alpha=0.2~(\Delta y=\Delta s=\frac{h}{2})$} &\multicolumn{3}{c}{$\alpha=0.5~(\Delta y=\Delta s=\frac{h}{2})$}
&\multicolumn{3}{c}{$\alpha=0.8~(\Delta y=\Delta s=\frac{h}{2})$}\\
\noalign{\smallskip}\hline\noalign{\smallskip}
Method & $h$ & $e_{max}$ & CO & M & $e_{max}$ & CO & M & $e_{max}$ & CO & M\\
\noalign{\smallskip}\hline\noalign{\smallskip}
Equal-      &1/10 &4.36 1e-2 &-    &458   &2.25 1e-2 &-    &506   &5.60 1e-2 &-    &571    \\
height      &1/20 &2.11 1e-2 &1.05 &2137  &1.47 1e-2 &0.61 &2472  &1.44 1e-2 &1.96 &2790  \\
            &1/40 &9.64 1e-3 &1.13 &10628 &7.16 1e-3 &1.04 &11758 &7.85 1e-3 &0.88 &12004 \\
of (2.2)    &1/80 &4.28 1e-3 &1.17 &47104 &1.36 1e-3 &2.40 &47721 &4.12 1e-3 &0.93 &48101\\
            &1/160&1.87 1e-3 &1.20 &189260&5.77 1e-4 &1.23 &191072&2.12 1e-3 &0.96 &192220\\
\noalign{\smallskip}\hline\noalign{\smallskip}
Equal-      &1/10 &4.36 1e-2 &-    &397   &2.40 1e-2 &-    &454   &2.48 1e-2 &-    &543    \\
area        &1/20 &2.11 1e-2 &1.05 &1856  &1.55 1e-2 &0.63 &2209  &1.44 1e-2 &0.78 &2651  \\
            &1/40 &9.64 1e-3 &1.13 &9070  &8.96 1e-3 &0.79 &10651 &7.85 1e-3 &0.88 &11513 \\
of (2.2)    &1/80 &4.28 1e-3 &1.17 &41542 &1.67 1e-3 &2.42 &43778 &4.12 1e-3 &0.93 &46100\\
            &1/160&1.87 1e-3 &1.20 &167412&5.77 1e-4 &1.54 &175082&2.12 1e-3 &0.96 &184222\\
\midrule
\midrule
\multicolumn{2}{c}{$\alpha$}  &\multicolumn{3}{c}{$\alpha=0.2~(\Delta y=\frac{h}{160},~\Delta s=\frac{h}{4})$} &\multicolumn{3}{c}{$\alpha=0.5~(\Delta y=\frac{h}{250},~\Delta s=\frac{h}{16})$}
&\multicolumn{3}{c}{$\alpha=0.8~(\Delta y=\frac{h}{120},~\Delta s=\frac{h}{40})$}\\
\noalign{\smallskip}\hline\noalign{\smallskip}
Equal-      &1/10 &4.36 1e-2 &-    &1159  &5.18 1e-3 &-    &1243  &2.48 1e-2 &-    &1181    \\
height      &1/20 &2.11 1e-2 &1.05 &4193  &2.01 1e-3 &1.36 &4538  &1.44 1e-2 &0.78 &4173  \\
            &1/40 &9.64 1e-3 &1.13 &19469 &1.18 1e-3 &0.77 &16412 &7.85 1e-3 &0.88 &14233 \\
of (2.3)    &1/80 &4.28 1e-3 &1.17 &52550 &8.61 1e-4 &0.45 &57446 &4.12 1e-3 &0.93 &46425\\
            &1/160&1.87 1e-3 &1.20 &180836&5.77 1e-4 &0.58 &196538&2.12 1e-3 &0.96 &142949\\
\noalign{\smallskip}\hline\noalign{\smallskip}
Equal-      &1/10 &4.36 1e-2 &-    &223   &5.14 1e-3 &-    &471   &2.48 1e-2 &-    &884    \\
area        &1/20 &2.11 1e-2 &1.05 &585   &2.01 1e-3 &1.36 &1471  &1.44 1e-2 &0.78 &2921  \\
            &1/40 &9.64 1e-3 &1.13 &1602  &1.42 1e-3 &0.51 &4344  &7.85 1e-3 &0.88 &8716 \\
of (2.3)    &1/80 &4.28 1e-3 &1.17 &4394  &8.61 1e-4 &0.71 &12096 &4.12 1e-3 &0.93 &23458\\
            &1/160&1.87 1e-3 &1.20 &12036 &5.77 1e-4 &0.58 &32807 &2.12 1e-3 &0.96 &57378\\
\midrule
\midrule
\multicolumn{2}{c}{$\alpha$}  &\multicolumn{3}{c}{$\alpha=1.2~(\Delta y=\Delta s=10 h)$} &\multicolumn{3}{c}{$\alpha=1.5~(\Delta y=\Delta s=10 h)$}
&\multicolumn{3}{c}{$\alpha=1.8~(\Delta y=\Delta s=10 h)$}\\
\noalign{\smallskip}\hline\noalign{\smallskip}
Method & $h$ & $e_{max}$ & CO & M & $e_{max}$ & CO & M & $e_{max}$ & CO & M\\
\noalign{\smallskip}\hline\noalign{\smallskip}
Equal-      &1/10 &7.97 1e-4 &-    &149   &2.82 1e-3 &-    &149   &4.82 1e-3 &-    &149    \\
height      &1/20 &2.44 1e-4 &1.71 &299   &7.55 1e-4 &1.90 &299   &1.27 1e-3 &1.92 &299 \\
            &1/40 &6.66 1e-5 &1.88 &599   &1.95 1e-4 &1.95 &599   &3.27 1e-4 &1.96 &599  \\
            &1/80 &1.73 1e-5 &1.95 &1199  &4.95 1e-5 &1.98 &1199  &8.27 1e-5 &1.98 &1199 \\
            &1/160&4.39 1e-6 &1.98 &2399  &1.25 1e-5 &1.99 &2399  &2.08 1e-5 &1.99 &2399\\
\noalign{\smallskip}\hline\noalign{\smallskip}
Equal-      &1/10 &7.97 1e-4 &-    &156   &2.82 1e-3 &-    &150   &4.82 1e-3 &-    &155    \\
area        &1/20 &2.44 1e-4 &1.71 &314   &7.56 1e-4 &1.90 &303   &1.27 1e-3 &1.92 &312 \\
            &1/40 &6.66 1e-5 &1.88 &628   &1.95 1e-4 &1.95 &609   &3.27 1e-4 &1.96 &627  \\
            &1/80 &1.73 1e-5 &1.95 &1256  &4.95 1e-5 &1.98 &1221  &8.27 1e-5 &1.98 &1256 \\
            &1/160&4.39 1e-6 &1.98 &2514  &1.25 1e-5 &1.99 &2445  &2.08 1e-5 &1.99 &2513\\
\noalign{\smallskip}\hline
\end{tabular}
\end{table}
%---------------------------------------------------------
\begin{figure}[!htbp]
\includegraphics[scale=0.4]{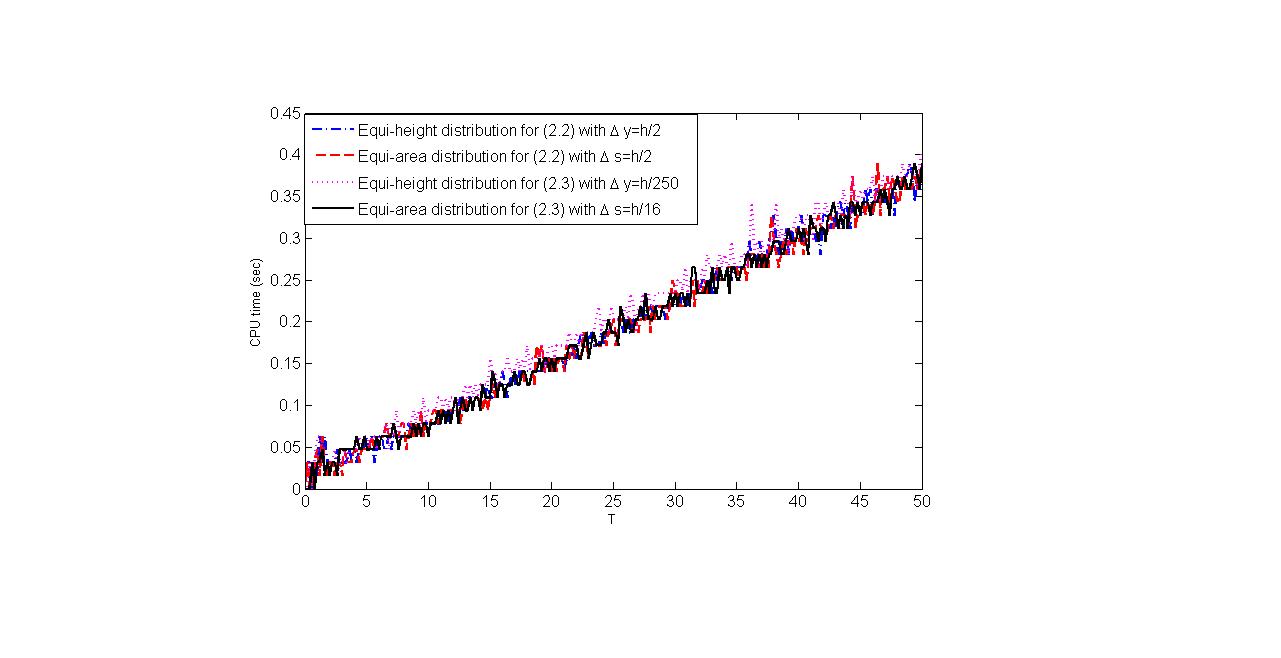}\\
\caption{The CUP time of the equal-height distribution as well as the equal-area distribution ones,
for $\alpha=0.5$, $T=50$, $\lambda=1$, $h=1/20$, and $\Delta y=\Delta s=h/2$.}\label{fig:3.1}
\end{figure}
\begin{figure}[!htbp]
\includegraphics[scale=0.4]{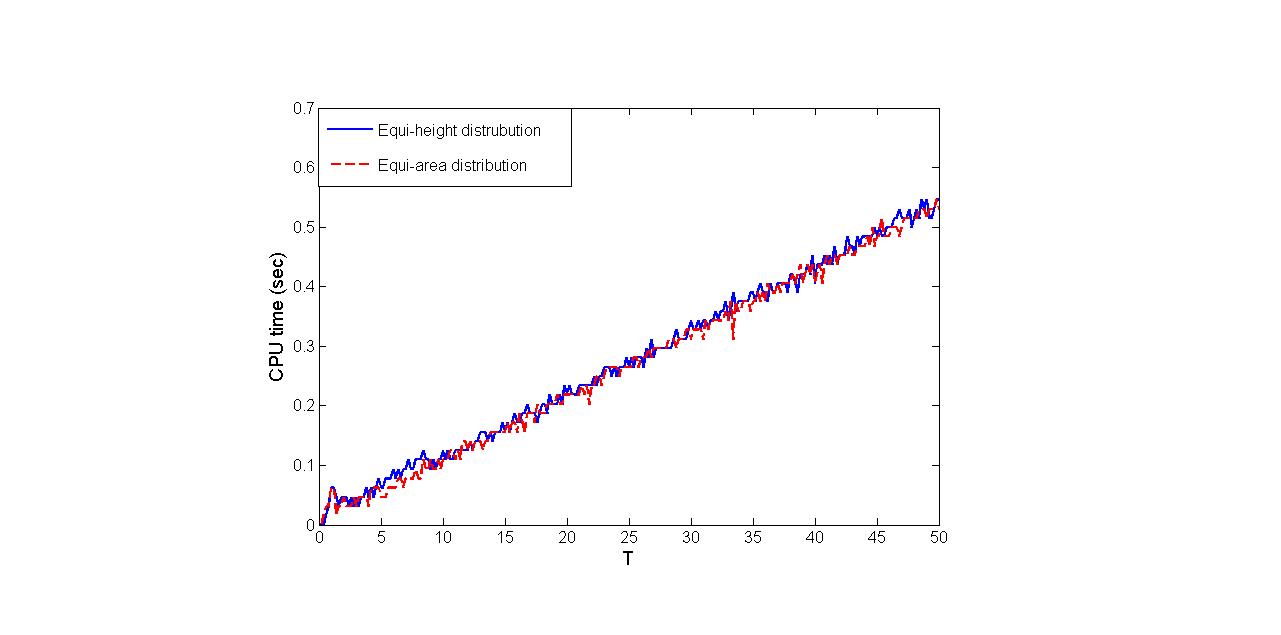}\\
\caption{The CUP time of the equal-height distribution as well as the equal-area distribution ones,
for $\alpha=1.5$, $T=50$, $\lambda=1$, $h=1/20$, and $\Delta y=\Delta s=10h$.}\label{fig:3.2}
\end{figure}
%=================================================
\begin{example}\label{example:3.3}
In this example, we examine the following initial value problem
\begin{equation}\label{equa3.6}
_{0}^{C}D_{t}^{\alpha,\lambda}x(t)=-x(t).
\end{equation}
The initial values are given as $x(0)=1$ (and $[e^{\lambda t}x(t)]'\big|_{t=0}=0$ if $1<\alpha<2$).
It is known that \cite{Li:14} the exact solution of this initial value problem is
$$x(t)=e^{-\lambda t}E_{\alpha,1}(-t^{\alpha}).$$
Here the generalized Mittag-Leffler function $E_{\alpha,\beta}(\cdot)$ is given by \cite{Podlubny:99}
\begin{equation}\label{equa3.7}
E_{\alpha,\beta}(z)
=\sum_{k=0}^{\infty}\frac{z^k}{\Gamma(\alpha k+\beta)},\quad \text{Re}(\alpha)>0.
\end{equation}
\end{example}

It is obvious that neither $x(t)$ nor $_{0}^{C}D_{t}^{\alpha,\lambda}x(t)$
has a bounded first (second) derivative at $t=0$ when $0<\alpha\leq 1$ ($1<\alpha\leq 2$).
Table \ref{table3.3} shows the numerical accuracy of the equidistributing methods by choosing suitable $\Delta y$ and $\Delta s$ for different $\alpha\in (0,1)$ when $T=4$ and $\lambda=1$. And the equal-area distribution methods still show their advantages in computation cost. 
%=========================================
\begin{table}\small
\caption{The maximum errors ($e_{max}$), convergence rates (CO), and the total times of quadrature nodes being used ($M$) of Example \ref{example:3.3} at $T=4$ and $\lambda=1$,
by using the equal-height distribution as well as the equal-area distribution ones,
for different $\alpha$, $h$, $\Delta y$, and $\Delta s$, respectively.}\label{table3.3}
\begin{tabular}{ccccccccccc}
\hline\noalign{\smallskip}
\multicolumn{2}{c}{$\alpha$}  &\multicolumn{3}{c}{$\alpha=0.2~(\Delta y=\Delta s= 10h)$} &\multicolumn{3}{c}{$\alpha=0.5~(\Delta y=\Delta s=h)$}
&\multicolumn{3}{c}{$\alpha=0.8~(\Delta y=\Delta s=h)$}\\
\noalign{\smallskip}\hline\noalign{\smallskip}
Method & $h$ & $e_{max}$ & CO & M & $e_{max}$ & CO & M & $e_{max}$ & CO & M\\
\noalign{\smallskip}\hline\noalign{\smallskip}
Equal-      &1/10 &1.08 1e-1 &-    &122   &6.54 1e-3 &-    &314   &1.07 1e-2 &-    &331    \\
height      &1/20 &7.53 1e-2 &0.52 &337   &3.58 1e-3 &0.87 &1570  &5.24 1e-3 &1.23 &1660  \\
            &1/40 &5.07 1e-2 &0.57 &1404  &1.72 1e-3 &1.06 &7189  &1.95 1e-3 &1.43 &6877 \\
of (2.2)    &1/80 &3.31 1e-2 &0.61 &7214  &9.00 1e-4 &0.93 &28875 &5.49 1e-4 &1.83 &27372\\
            &1/160&2.11 1e-2 &0.65 &37504 &5.88 1e-4 &0.61 &115585&1.53 1e-4 &1.84 &109480\\
\noalign{\smallskip}\hline\noalign{\smallskip}
Equal-      &1/10 &1.08 1e-1 &-    &119   &7.64 1e-3 &-    &269   &1.07 1e-2 &-    &295    \\
area        &1/20 &7.53 1e-2 &0.52 &296   &4.05 1e-3 &0.91 &1295  &5.46 1e-3 &0.97 &1500  \\
            &1/40 &5.07 1e-2 &0.57 &1073  &2.00 1e-3 &1.02 &6162  &2.17 1e-3 &1.33 &6221 \\
of (2.2)    &1/80 &3.31 1e-2 &0.61 &5015 &9.00 1e-4 &1.15 &24833 &6.12 1e-4 &1.83 &24944\\
            &1/160&2.11 1e-2 &0.65 &23817 &5.88 1e-4 &0.61 &99550 &1.68 1e-4 &1.86 &99922\\
\midrule
\midrule
\multicolumn{2}{c}{$\alpha$}  &\multicolumn{3}{c}{$\alpha=0.2~(\Delta y=\frac{h}{40},~\Delta s= h)$} &\multicolumn{3}{c}{$\alpha=0.5~(\Delta y=\frac{h}{80},~\Delta s=\frac{h}{10})$}
&\multicolumn{3}{c}{$\alpha=0.8~(\Delta y=\frac{h}{400},~\Delta s=\frac{h}{40})$}\\
\noalign{\smallskip}\hline\noalign{\smallskip}
Equal-      &1/10 &1.08 1e-1 &-    &692   &4.05 1e-3 &-    &314   &6.09 1e-4 &-    &860    \\
height      &1/20 &7.53 1e-2 &0.52 &2437  &1.80 1e-3 &1.17 &1570  &2.81 1e-4 &1.12 &3295  \\
            &1/40 &5.07 1e-2 &0.57 &8468  &1.20 1e-3 &0.59 &7189  &1.15 1e-4 &1.29 &12458 \\
of (2.3)    &1/80 &3.31 1e-2 &0.61 &2896  &9.00 1e-4 &0.41 &28875 &4.37 1e-5 &1.40 &45625\\
            &1/160&2.11 1e-2 &0.65 &96830 &5.88 1e-4 &0.61 &115585&2.61 1e-5 &0.74 &15926\\
\noalign{\smallskip}\hline\noalign{\smallskip}
Equal-      &1/10 &1.08 1e-1 &-    &125   &4.05 1e-3 &-    &471   &6.09 1e-4 &-    &707    \\
area        &1/20 &7.53 1e-2 &0.52 &266   &1.80 1e-3 &1.17 &1471  &2.81 1e-4 &1.12 &2333  \\
            &1/40 &5.07 1e-2 &0.57 &595   &1.20 1e-3 &0.59 &4344  &1.15 1e-4 &1.29 &6994 \\
of (2.3)    &1/80 &3.31 1e-2 &0.61 &1378  &9.00 1e-4 &0.41 &12096 &5.05 1e-5 &1.19 &18771\\
            &1/160&2.11 1e-2 &0.65 &3343  &5.88 1e-4 &0.61 &32807 &2.38 1e-5 &1.09 &45730\\
\noalign{\smallskip}\hline
\end{tabular}
\end{table}

\section{Conclusions}\label{sec:4}
For more deeply discussing the anomalous diffusion, sometimes the L\'evy waiting time distribution has to be tempered because of the finite lifetime of biological particles. Then the time derivative of the Fokker-Planck equation describing this type of trapped dynamics is the tempered fractional operator. Since the tempered fractional operator is still nonlocal, generally the computation cost of numerically solving the tempered time-dependent problem is quadratically increasing with time $t$. This paper provides the predictor-corrector approach with theoretically proved convergence order $\min\{1+2\alpha,2\}$ for the tempered fractional differential equation. The proposed predictor-corrector schemes on equidistributed meshes are detailedly discussed.   In fact, the main contribution of this paper comes from the proposed equidistributing schemes which have linearly increasing computation cost with time $t$ while keeping the accuracy; when $\alpha \in (1,2)$, the numerical results show that the convergence order can  even exceed $4$. And the larger the time is, the more benefits are obtained for the equidistributing methods.

%Usually, it is difficult to keep a high accuracy while using less computational cost at the same time when solving fractional differential equations. By noticing that although the fractional operator is non-local, it owns the short memory principle, i.e., if $\alpha \in (0,1)$ the integral kernel $(t_{n+1}-\tau)^{\alpha-1}$ decays algebraically with the order $1-\alpha$, and if $\alpha \in (1,2)$ the kernel $(t_{n+1}-\tau)^{\alpha-1}-(t_n-\tau)^{\alpha-1}$ decays with the order $2-\alpha$. Basing on this fact, this paper chooses equidistributing
%meshes to numerically solving the tempered fractional initial value problem. Specifically, equal-height as well as equal-area algorithms are separately designed for $\alpha\in(0,1]$ and $\alpha\in(1,2)$ in detail. Numerical examples demonstrate that on accuracy and computer expenditure, the equidistributing predictor-corrector methods can benefit both, they can even succeed in deducing computational cost to liner degree, which would show greater value when dealing with the problems on long time interval.

\section*{Acknowledgements}
The authors thank W.H. Deng for the discussions. This work was supported by the National Natural Science Foundation of China under Grant 11271173 and 11471150, the Fundamental Research Funds for the Central Universities (Northwest University for Nationalities) under Grant No. 31920130007, the Humanities and Social Sciences Youth Projects of the Ministry of Education of P R China under Grant No. 12YJCZH027 and No. 13YJCZH029, and the Humanities and Social Sciences Planning Projects of the Ministry of Education of P R China under Grant No. 11YJAZH053.

%========================================


\begin{thebibliography}{99}
%\bibitem{Bouchaud:90}
%J.-P. Bouchaud, Anomalous diffusion in disordered media:
%statistical mechanismst, models and physical applications, {Physics Reports}, {4-5 }(1990),127-293.

%\bibitem{Agrawal:04} Om P. Agrawal, J.A. Tenreiro Machado, and J. Sabatier,
% Introduction, Nonlinear Dynam., {\bf 38} (2004) 1-2.

\bibitem{Baeumera:10} B. Baeumera and M.M. Meerschaert, Tempered stable L\'{e}vy motion and transient super-diffusion, J. Comput. Appl. Math. {\bf 233} (2010) 2438-2448.

%\bibitem{Bouchaud:90} J. Bouchaud and A. Georges,  Anomalous diffusion in disordered media:
%Statistical mechanisms, models and physical applications, Phys. Rep.,
%{\bf 195} (1990) 127-293.

%\bibitem{Butzer:00} P.L. Butzer and U. Westphal,  An Introduction to Fractional Calculus, World
%Scientific, Singapore (2000).

%\bibitem{Deng:06} W.H. Deng, J.H. L\"{u}, Design of multi-directional multi-scroll chaotic attractors based on
%fractional differential systems via switching control, Chaos, {\bf
%16}, 043120 (2006)
%
%\bibitem{Deng:07a} W.H. Deng, Generating 3-D scroll grid attractors
%of fractional differential systems via stair function, Internat. J.
%Bifur. Chaos Appl. Sci. Engrg., {\bf 17} (2007) 3965-3983.
%
%\bibitem{Deng:07b} W.H. Deng, Generalized synchronization in fractional order
%systems, Phys. Rev. E, {\bf 75} (2007) 056201.

\bibitem{Cartea:07a}
\'{A}. Cartea and D. del-Castillo-Negrete, Fractional diffusion models of option prices in markets with jumps, Phys. A, {\bf 374} (2007) 749-763.

\bibitem{Cartea:07}
\'{A}. Cartea and  D. del-Castillo-Negrete, Fluid limit of the continuous-time random walk with general L\'{e}vy jump distribution functions, Phys.  Rev. E, {\bf 76} (2007) 041105.

%\bibitem{Chen:13}
%M.H. Chen and W.H. Deng, Discretized fractional
%substantial calculus, ESAIM: M2AN, (2014), doi: 10.1051/m2an/2014037 (arXiv:1310.3086).

\bibitem{Deng:07c} W.H. Deng, Short memory principle and a predictor-corrector approach for fractional
differential equations, J. Comput. Appl. Math., {\bf 206} (2007) 174-188.


\bibitem{Deng:07d} W.H. Deng,  Numerical algorithm for the time fractional Fokker-Planck equation, J. Comput. Phys., {\bf 227} (2007) 1510-1522.

\bibitem{Deng:12} W.H. Deng and C. Li, Numerical schemes for fractional ordinary differential equations, In: P. Miidla,  (ed.) Numerical Modelling, pp. 355-374, InTech, Rijeka (2012), Chap. 16.

\bibitem{Diethelm:02a} K. Diethelm and N.J. Ford,  Analysis of fractional differential equations,  J. Math. Anal. Appl., {\bf 265} (2002) 229-248.

\bibitem{Diethelm:02b} K. Diethelm, N.J. Ford, and A.D. Freed,  A predictor-corrector approach for the numerical solution of
fractional differential equations,  Nonlinear Dynam., {\bf 29} (2002) 3-22.


\bibitem{Diethelm:04} K. Diethelm, N.J. Ford, and A.D. Freed,  Detailed error analysis for a fractional Adams
method, Numer. Algorithms, {\bf 36} (2004) 31-52.

\bibitem{Ford:01} N.J. Ford and A.C. Simpson, The numerical solution of fractional
differential equations: Speed versus accuracy, Numer. Algorithms,
{\bf 26} (2001) 333-346.

\bibitem{Gajda:11}
J. Gajda and M. Magdziarz,
Fractional Fokker-Planck equation with tempered $\alpha$-stable waiting times:
Langevin picture and computer simulation, Phys.  Rev. E, {\bf 82} (2010) 011117.

\bibitem{Hanyga:2001}
A. Hanyga, Wave propagation in media
with singular memory, Math. Comput. Model., {\bf 34} (2001) 1399-1421.

%\bibitem{Heymans:05} N. Heymans, I. Podlubny, Physical interpretation of initial conditions
%for fractional differential equations with Riemann-Liouville
%fractional derivatives, Rheol. Acta, {\bf 37} (2005) 1-7.


%\bibitem{Ichise:71} Ichise, M., Nagayanagi, Y., Kojima, T.:  An analog
%simulation of noninteger order transfer functions for analysis of
%electrode processes. J. Electroanal. Chem. {\bf 33}, 253-265 (1971)


%\bibitem{Kenneth:93} S.M. Kenneth and R. Bertram, An Introduction to the Fractional Calculus and
%Fractional Differential Equations, Wiley-Interscience Publication, NY,
%US (1993).

%\bibitem{Li:07} C.P. Li, W.H. Deng, Remarks on fractional
%derivatives, Appl. Math. Comput., {\bf 187} (2007) 777-784.
%
\bibitem{Li:14} C. Li, W.H. Deng, and L.J. Zhao, Well-posedness and numerical algorithm for the tempered fractional ordinary differential equations, arXiv preprint arXiv:1501.00376, 2015.

\bibitem{Magdziarz:07} M. Magdziarz and A. Weron, Competition between subdiffusion and L\'{e}vy flights: A Monte Carlo approach, Phys. Rev.
E, {\bf 75} (2007) 056702.

%\bibitem{Meerschaert:06} M.M. Meerschaert, H.-P. Scheffler, and C. Tadjeran,
% Finite difference methods for two-dimensional fractional
%dispersion equation, J. Comput. Phys., {\bf 211} (2006) 249-261.

\bibitem{Meerschaert:09}  M.M. Meerschaert, Y. Zhang, and B. Baeumer, Tempered anomalous diffusion in heterogeneous systems, Geophys. Res. Lett.,
    {\bf 35} (2008)  L17403.


\bibitem{Meerschaert:14}
M.M. Meerschaert, F. Sabzikar,
M.S. Phanikumar, and A. Zeleke,
Tempered fractional time series model
for turbulence in geophysical flows, Journal of Statistical Mechanics: Theory and Experiment 14 (2014) 1742-5468.

%\bibitem{Metzler:00} R. Metzler and J. Klafter,   The random walk's guide to anomalous diffusion:
%A fractional dynamics approach, Phys. Rep., {\bf 339} (2000) 1-77.


\bibitem{Podlubny:99} I. Podlubny, Fractional Differential Equations, Academic Press, New York
(1999).

%\bibitem{Podlubny:02} I. Podlubny, Geometric and physical interpretation of
%fractional integration and fractional differentiation, Fract.
%Calculus Appl. Anal. 5 (2002) 367-386.

\bibitem{Sabzikar:15}
F. Sabzikar, M.M. Meerschaert, and J.H. Chen, Tempered fractional calculus, J. Comput. Phys., (2015), doi: 10.1016/j.jcp.2014.04.024.

\bibitem{Samko:93} S. Samko, A.  Kilbas, and O. Marichev, Fractional Integrals and Derivatives:
 Theory and Applications, Gordon \& Breach, London (1993).

\bibitem{Schmidt:10}
M.G.W. Schmidt, F. Sagu\'{e}s, and I.M. Sokolov, Mesoscopic description of reactions for anomalous
diffusion: a case study, J. Phys.: Condens. Matter, {\bf 19} (2007) 065118.

%\bibitem{Sun:84} Sun, H.H., Abdelwahab, A.A., Onaral, B.:  Linear approximation of transfer function with a
%pole of fractional order. IEEE Trans. Automat. Contr. {\bf 29},
%441-444 (1984)

\bibitem{White:79} A.B. White, On selection of equidistributing meshes for two-point boundary-value problems, SIAM. J. Numer. Anal., {\bf 16} (1979) 472-502.

\bibitem{Zhao:14} L.J. Zhao and W.H. Deng, Jacobian-predictor-corrector approach for fractional differential equations, Adv. Comput. Math., {\bf 40} (2014) 137-165.


\end{thebibliography}
  \end{document}